\newcommand{\cB}{\mathcal{B}}
\newcommand{\cF}{\mathcal{F}}
\newcommand{\cG}{\mathcal{G}}
\newcommand{\cH}{\mathcal{H}}
\newcommand{\cK}{\mathcal{K}}
\newcommand{\cM}{\mathcal{M}}
\newcommand{\cO}{\mathcal{O}}
\newcommand{\cP}{\mathcal{P}}
\newcommand{\cS}{\mathcal{S}}
\newcommand{\cT}{\mathcal{T}}
\newcommand{\cW}{\mathcal{W}}
\newcommand{\cX}{\mathcal{X}}
\newcommand{\eps}{\varepsilon}
\newcommand{\R}{\mathbb{R}}
\newcommand{\res}{\mathop{\hbox{\vrule height 7pt width .5pt depth 0pt \vrule height .5pt width 6pt depth 0pt}}\nolimits}
\newcommand{\supp}{\operatorname{supp}}
\theoremstyle{plain}
	\newtheorem{theorem}{Theorem}[section]
	\newtheorem{proposition}[theorem]{Proposition}
	\newtheorem{corollary}[theorem]{Corollary}
	\newtheorem{lemma}[theorem]{Lemma}
\theoremstyle{definition}
	\newtheorem{definition}[theorem]{Definition}
\theoremstyle{remark}
	\newtheorem{remark}[theorem]{Remark}
\title[A model for Alzheimer's disease]{Well-posedness of a mathematical model for Alzheimer's disease}
\author[M. Bertsch]{Michiel Bertsch}
\address{Dipartimento di Matematica, Universit\`{a} di Roma ``Tor Vergata'', Via della Ricerca Scientifica 1, 00133 Roma, Italy \newline
		\indent Istituto per le Applicazoni del Calcolo ``M. Picone'', Consiglio Nazionale delle Ricerche, Via dei Taurini 19, 00185 Roma, Italy}
\email{bertsch@mat.uniroma2.it}
\author[B. Franchi]{Bruno Franchi}
\address{University of Bologna, Department of Mathematics, Piazza di Porta S. Donato 5, 40126 Bologna, Italy}
\email{bruno.franchi@unibo.it}
\author[M. C. Tesi]{Maria Carla Tesi}
\address{University of Bologna, Department of Mathematics, Piazza di Porta S. Donato 5, 40126 Bologna, Italy}
\email{mariacarla.tesi@unibo.it}
\author[A. Tosin]{Andrea Tosin}
\address{Department of Mathematical Sciences ``G. L. Lagrange'', Politecnico di Torino, Corso Duca degli Abruzzi 24, 10129 Torino, Italy}
\email{andrea.tosin@polito.it}
\keywords{Transport and diffusion equations; Smoluchowski equations; mathematical models of Alzheimer's disease}
\begin{document}

\begin{abstract}
We consider the existence and uniqueness of solutions of an initial-boundary value problem for a coupled system of PDE's arising in a model for Alzheimer's disease. Apart from reaction diffusion equations, the system contains a transport equation in a bounded interval for a probability measure which is related to the malfunctioning of neurons. The main ingredients to prove existence are: the method of characteristics for the transport equation, a priori estimates for solutions of the reaction diffusion equations, a variant of the classical contraction theorem, and the Wasserstein metric for the part concerning the probability measure. We stress that all hypotheses on the data are not suggested by mathematical artefacts, but are naturally imposed by modelling considerations. In particular the use of a probability measure is natural from a modelling point of view. The nontrivial part of the analysis is the suitable combination of the various mathematical tools, which is not quite routine and requires various technical adjustments.
\end{abstract}

\maketitle

\section{Introduction}\label{intro}
In \cite{BFMTT, BFTT} a macroscopic mathematical model was proposed which describes the onset and evolution of Alzheimer's disease (AD). This model is meant to mirror the so-called \emph{Amyloid Cascade Hypothesis~\cite{haass_delkoe,karran_et_al,SH}}, coupled with the spreading of the disease through neuron-to-neuron transmission (prionoid hypothesis~\cite{Braak_DelTredici,tatarnikova_et_al}). Alzheimer's disease (AD) is the prevalent form of late life dementia. Its global prevalence, about 24 millions in 2011, is expected to double in 20 years~\cite{reitz_etal_epidemiology}.

In order to clarify the structure of our equations and the choice of our assumptions, let us sketch a gist of their biological background. We refer to \cite{BFMTT,BFTT} for a complete description of the model and an account of the most recent biomedical literature. The model focusses on the role of the polymer beta-amyloid, in particular its toxic soluble isoform A$\beta_{42}$. Monomers of  A$\beta_{42}$ are regularly produced by neurons and successively cleared -- among others -- by the microglia. In the last decades, researchers have observed that an imbalance between production and clearance of A$\beta_{42}$ (shortly A$\beta$ from now on) is a very early, often initiating factor in AD. Soluble A$\beta$  diffuses through the microscopic tortuosity of the brain tissue and undergoes an agglomeration process. Eventually this leads to the formation of long, insoluble fibrils, accumulating in spherical deposits known as senile plaques that are solid deposits of even larger aggregates of A$\beta$; nowadays, plaques are not considered neurotoxic, but are usually abundantly present in the brain of AD-patients (though they can be present in old brains without any symptom of dementia). Plaques are clinically observable through medical imaging using a special type of PET  (Positron Emission Tomography) scan.

Below we briefly describe the model. The main purpose of the present paper is to establish its mathematical well-posedness. Mathematically, our model consists of a transport equation coupled with a system of nonlinear diffusion equations (a Smoluchowski-type system with diffusion). Due to the very nature of the biological  phenomena we are studying, the main feature of such a system is that the transport velocity depends on the solution of the Smoluchowski equation, which, in turn, contains a source term that depends on the solution of the transport equation, so that the two groups of equations cannot be uncoupled. For an introduction to the use of transport equations in mathematical models of life sciences, we refer the reader to \cite{perthame}.

Let us give a less cursory description of the system which we consider, from both a mathematical and biomedical point of view. We do not enter the biological details and merely mention  those which are related to the structure of our equations. We would like  to stress that the equations of our model involve only functions that have a precise qualitative clinical counterpart in routinely observable phenomena: the health state of the different brain regions (by means of a PET measuring the cerebral glucose metabolism), the amount of A$\beta$ in the cerebral spinal fluid, and the A$\beta$ plaques (by means of amyloid-PET scans).

\medskip

Let $\Omega\subseteq\R^n$ be a portion of cerebral tissue. The molar concentration of soluble A$\beta$ polymers of length $m$ at $x\in \Omega$ and time $t\ge 0$ is denoted by $u_m(x,t)$ ($1\leq m<N$), that of clusters of oligomers of length greater or equal to $N$ (the plaques) by $u_N(x,t)$. We use a parameter $a$, ranging from $0$ to $1$, to describe the \emph{degree of malfunctioning} of a neuron; $a$ close to $0$ stands for ``the neuron is healthy'' and $a$ close to $1$ for ``the neuron is dead''. Given $x\in\Omega$ and $t\geq 0$, $f=f_{x,t}$ is a probability measure and $df_{x,t}(a)$ denotes the fraction of neurons at $x$ and time $t$ with degree of malfunctioning between $a$ and $a+da$. The progression of AD is mainly determined by the \emph{deterioration rate} of the health state of the neurons, $v=v_{x}(a,t)\ge 0$. We use the notation $v[f]$ to stress its dependence on $f$:
\begin{equation}
	(v[f])_{x}(a,t):=\int_{\Omega}\left(\int_{[0,1]}\cK(x,a,y,b)
	\,df_{y,t}(b)\right)dy+\cS(x,a,u_1(x,t),\dots,u_{N-1}(x,t)).
	\label{velocity}
\end{equation}

The term $\cS\geq 0$ in~\eqref{velocity} models the action of toxic A$\beta$ oligomers. For example, assuming that the toxicity of soluble A$\beta$-polymers is proportional to their total mass and introducing a threshold value $\overline{U}>0$ for the amount of toxic A$\beta$ needed to damage neurons, a possible choice for $\cS$ is
\begin{equation} \label{mathcal S}
	\cS=C_{\mathcal S}(1-a){\left(\sum\limits_{m=1}^{N-1}mu_m(x,\,t)-\overline{U}\right)}^{+},
		\qquad\text{where } p^+:=\max\{p,0\},
\end{equation}
see also~\cite{BFMTT} for a more detailed discussion.

The integral term in \eqref{velocity} describes the possible propagation of AD through the neural pathway. Malfunctioning neighbours are harmful for a neuron's health state, while healthy ones are not: $\cK(x,a,y,b) \geq 0$ for all $x,y\in \Omega$ and $a,b\in [0,1]$ and 
$$ \cK(x,a,y,b) = 0 \quad \text{if\ } a>b. $$
For the sake of simplicity we choose $\cK(x,a,y,b)=\cG_x(a,b)h(\vert x-y\vert)$, where $h(r)$ is a nonnegative and decreasing function which vanishes at some $r=r_0$ and satisfies $\int_{\vert y\vert<r_0}h(\vert y\vert)\,dy=1$. For instance, in \cite{BFMTT} the following form of $\cG_x$ is used: $\cG_x(a,\,b)=C_{\mathcal G}(b-a)^{+}$, which does not depend explicitly on $x$. In the limit $r_0\to 0$,~\eqref{velocity} reduces to
\begin{equation} \label{velocity bis}
	(v[f])_{x}(a,t)=\int_{[0,1]}\cG_x(a,b)\,df_{x,t}(b)+\cS(x,a,u_1(x,t),\,\dots,u_{N-1}(x,t)).
\end{equation}
We shall henceforth use~\eqref{velocity bis} for the deterioration rate $v[f]$.

In view of the meaning of the rate $v$, the equation for $f$ is given by
\begin{equation} \label{transport}
	\partial_t f+\partial_a(fv[f])=J[f].
\end{equation}
The term $J[f]$ represents the onset of AD: we assume that in small (randomly chosen) parts of the cerebral tissue, concentrated for instance in the hippocampus and described by a characteristic function $\chi(x,t)$, the degree of malfunctioning of neurons randomly jumps to higher values due to external agents or genetic factors. More precisely, $(J[f])_{x,t}$ denotes the measure
\begin{equation} \label{def J}
	d(J[f])_{x,t}(a):=\eta(t)\chi(x,t)\left[\left(\int_{[0,1]}P(t,b, a)\,df_{x,t}(b)\right)da-	df_{x,t}(a)\right],
\end{equation}
where the function $P(t,b, a)$ is the probability to jump from state $b$ to state $a$ (which vanishes if $a<b$) and $\eta>0$ is the jump frequency. A possible choice is 
\begin{equation*}
	P(t,b,a)\equiv P(b,a)=
		\begin{cases}
			\tfrac{2}{1-b} & \text{if\ } b\leq a\leq\frac12(1+b) \\
			0 & \text{otherwise}.
		\end{cases}
\end{equation*}

It is worth stressing that the choice of looking for a \emph{measure} $f_{x,t}$ comes from the model itself. In fact, a ``healthy brain'' would correspond to $f_{x,t}(a)=\delta(a)$, where $\delta$ is the Dirac measure centred at the origin.

Now we are ready to write the system of equations for $f$, $u_1,\cdots,u_N$:
\begin{equation} \label{complete system}
\begin{cases}
	\partial_t f+\partial_a\left(fv[f]\right)=J[f] & \text{in } \Omega\times [0,1]\times (0,T] \\
	\eps\partial_tu_1-d_1\Delta u_1=R_1:=-u_1\sum\limits_{j=1}^{N}a_{1,j}u_j+\cF[f]-\sigma_1u_1 & \text{in } Q_T=\Omega\times (0,T] \\
	\eps\partial_tu_m-d_m\Delta u_m=R_m:=\tfrac{1}{2}\sum\limits_{j=1}^{m-1}a_{j,m-j}u_ju_{m-j} \\
		\phantom{\eps\partial_tu_m=d_m\Delta u_m=R_m:=}-u_m\sum\limits_{j=1}^{N}a_{m,j}u_j-\sigma_m u_m & \text{in } Q_T \quad (2\leq m<N) \\
	\eps\partial_tu_N=\tfrac{1}{2}\sum\limits_{\substack{j+k\geq N \\ k,\,j<N}}a_{j,k}u_ju_{k} & \text{in }Q_T.
\end{cases}
\end{equation}
Here $\eps>0$ is a small parameter which expresses the existence of two time scales: processes which determine the dynamics of A$\beta$ (production, aggregation, diffusion, deposition) occur on a much smaller time scale (hours) than the evolution of the disease (years). The diffusion coefficients $d_m$ depend on the length of the polymer (longer polymers diffuse less); plaques do not diffuse. The quadratic terms (in $u_i$) model the aggregation of A$\beta$ polymers, according to the Smoluchowski equations. We refer to \cite{AFMT,FT_torino} for an extensive discussion of the aggregation mechanism and the choice of the coagulation rates $a_{i,j}$. The linear terms $-\sigma_mu_m$ model the phagocytic activity of the microglia and other bulk clearance processes~\cite{Iliff_et_al}.

We stress that system~\eqref{complete system} is fully coupled, because the transport equation for $f$ contains a dependence on $u_1,\,\dots,\,u_{N-1}$ in the deterioration rate $v[f]$, cf. \eqref{velocity bis}. Notice that if $\cS\equiv 0$ in \eqref{velocity bis} then the equation for $f$ decouples from the rest of the system and may be possibly studied alone by relying on the results reported in~\cite{CCGU,EHM}. Nevertheless, the assumption $\cS\equiv 0$ is not a minor issue in the modelling of AD spreading, because it would imply a propagation of the disease due only to prionic diffusion, which is a controversial topic in the medical literature. For this reason, in our model we prefer to take into account also the toxic contribution of A$\beta$ oligomers, i.e. $\cS\neq 0$ in \eqref{velocity bis}, which requires to study system \eqref{complete system} as a whole.

A$\beta$ monomers are produced by neurons. Their production increases if neurons are damaged, and a possible choice for  the source term $\cF$ in the equation for $u_1$ is
\begin{equation} \label{mathcal F}
	\cF[f](x,t)=C_{\mathcal F}\int_0^1(\mu_0+a)(1-a)\,df_{x,t}(a).
\end{equation}
The small constant $\mu_0>0$ accounts for A$\beta$ production by healthy neurons. 

We assume that $\partial \Omega$ consists of smooth disjunct boundaries, $\partial \Omega_0$ and $\partial \Omega_1$, where $\partial \Omega_1$ represents the disjunct union of the boundaries of the cerebral ventricles through which A$\beta$ is removed from the cerebrospinal fluid (CSF) by an outward flow through the choroid plexus (cf. \cite{Iliff_et_al,serot_et_al}). In the present paper we solve system \eqref{complete system} with appropriate initial-boundary conditions:
\begin{equation} \label{IBC}
	\begin{cases}
		f_{x,0}=(f_0)_x & \text{if } x\in\Omega \\
		u_i(x,0)=u_{0i}(x) &\text{if } x\in\Omega,\ 1\le i\le N\\
		\partial_n u_i(x,t)=0 & \text{if } x\in\partial\Omega_0, \ t>0, \ 1\le i<N \\
		\partial_n u_i(x,t)=-\gamma_i u_i(x,t) & \text{if } x\in\partial\Omega_1,\ t>0,\ 1\le i<N,
	\end{cases}
\end{equation}
where $n$ is the outward pointing normal on $\partial \Omega$.

In Section \ref{Section2} we describe the hypotheses on the data and formulate the main result on global well-posedness. In Section \ref{characteristics} we rewrite the system in terms of the characteristics of the transport equation for $f$ and show that the new system is equivalent to the original one. We point out that under our assumptions the characteristics exist in the classical sense. The major difficulty arises from the strong nonlinearity of the system: the transport equation depends nonlinearly on both its solution (through an integral operator) and the solution of the Smoluchowski system, which in turn depends on the solution of the transport equation. In Section \ref{section local existence} we use a contraction argument to prove local existence and uniqueness; not surprisingly, the metric for the probability measures $f$ will involve Wasserstein distances. The fact that the Wasserstein distance $\cW_1$ depends on the action of the measures on \emph{Lipschitz} functions yields a technical difficulty when we try to apply an iteration argument in order to obtain the local existence of a solution. This difficulty can be bypassed thanks to an {\it ad hoc} formulation of the standard fixed point theorem. Finally, in Section \ref{section global existence} we prove a priori bounds which imply global existence. In Appendix \ref{app:measures} we collect some technical facts about probability measures and Wasserstein distances to make this paper as self-consistent as possible.

We conclude with some comments.

For more details on the model we refer to \cite{BFMTT}, which also contains some two-dimensional simulations and qualitative comparison with experimental data. A derivation of the transport equation for $f$ from a Boltzmann-type kinetic approach is contained in \cite{BFTT}.

The macroscopic model which we have briefly described, and in particular the use of the Smoluchowski equations to model the aggregation of A$\beta$, is based on an earlier microscopical model described in \cite {AFMT,FT_torino}. The passage from that microscopic aggregation-diffusion model to Smoluchowski equations with a source term as in \eqref{complete system} is obtained by a two-scale homogenization technique in \cite{FL,FL_wheeden}.

For the moment the model deliberately neglects some important aspects of the disease such as the role of the tau-protein, but in a future paper we shall extend the model and make it more realistic. Nevertheless, the term $\cG$ in the deterioration rate for the equation of $f$ can already be thought of as taking into account a ``prionic'' model of the spread  of the disease, and associated with the diffusion of the intraneural tau-protein (see, e,g., \cite{HHPW,tatarnikova_et_al}). Despite the extreme complexity of AD, mathematically such an extension is doable due to the high degree of flexibility of the modelling approach. The major difficulty is the lack of both experimental data and detailed knowledge about the relevant biomedical processes, but fortunately biomedical research on AD evolves rapidly. 

In this context we also mention a recent paper by Hao and Friedman \cite{HF}, which does take into account a higher degree of AD's complexity and contains simulations of medical therapies; the authors however do not consider AD's initial stage. A major challenge is how to diagnose AD's early stage and develop therapies to slow down its further development.      

\section{Problem statement and main results} \label{Section2}
Throughout the paper we set $T>0$, $N\in\mathbb N$, while $\Omega \subset \R^n$ is an open and bounded set with a smooth boundary $\partial \Omega$, which is the disjunct union of $\partial \Omega_0$ and $\partial \Omega_1$. 

To treat the measures $f_{x,t}$ we introduce a metric space $X_{[0,1]}$: 
\begin{definition} \label{wasserstein metric} 
The space $\mathcal P([0,1])$ of probability measures on $[0,1]$ endowed with the Wasserstein distance $\cW_1$ is denoted by $X_{[0,1]}$.
\end{definition}

We refer to \cite{AGS} for the definition of the Wasserstein distances $\cW_p$. By Proposition \ref{moment}, $X_{[0,1]}$ is a complete separable metric space. By Proposition \ref{wasserstein}, a sequence $(\mu_n)_{n\in\mathbb N}$ converges in $X_{[0,1]}$ if and only if it converges narrowly or $\text{weakly}^*$. 

We denote by $C([0,T]; X_{[0,1]})$ the space of continuous functions from the interval $[0,T]$ to $X_{[0,1]}$. Endowed with the distance
$$ \max_{0\le t\le T} \cW_1((\mu_1)_t, (\mu_2)_t), $$
also $C([0,T]; X_{[0,1]})$ is a complete metric space.

\subsection{Hypotheses on the data}\label{subsection2.1}
Throughout the paper we shall use the following assumptions on the data (below $\partial_a$, $\nabla_u$ etc. denote distributional derivatives; $C$ denotes a generic constant):
\begin{itemize}
\item [($H_1$)] $\varepsilon, C_{\mathcal F}, \mu_0, d_i, \sigma_i, \gamma_i, a_{i,j}$ are positive constants ($1\le i<N$, $1\le j\le N$);
\item [($H_2$)] $u_{0i}\in C(\overline \Omega)$ is nonnegative ($i=1,\cdots,N$), and $(f_0)_x \in X_{[0,1]}$ for a.e.~$x\in \Omega$;
\item [($H_3$)] $\chi$ is the characteristic function of a measurable set $Q_0\subseteq Q_T=\Omega\times [0,T]$; the function $\eta \in C([0,T])$ is nonnegative;
\item [($H_4$)] for a.e.~$x\in \Omega $, $\cG_x\in C\left([0,1]^2\right)$, $\mathcal G_x(1,b)= 0$ for $b\in [0,1]$, and 
\begin{equation} \label{Ga global}
	-C\le\partial_a\cG_x\le 0,\,\left|\partial_b\cG_x\right|\le C \quad \text{in } [0,1]^2;
\end{equation}
\item [($H_5$)] $\cS\in L^\infty\left(\Omega; C\left([0,1] \times [0,\infty)^{N-1}\right)\right)$, $\mathcal S(x,1,u_1,\dots, u_{N-1})=0$ for $u_i\ge 0$ and a.e. $x\in \Omega $, and for all compact sets $K\subset [0,\infty)^{N-1}$ there exists a constant $C(K)$ such that for a.e. $x\in\Omega$ 
\begin{equation}\label{Sa global}
-C(K)\le \partial_a \mathcal S(x,a,u) \le 0
, \ \left| \mathcal \nabla_u\cS(x,a,u)  \right|\le C(K)\quad \text{for } a\in[0,1], \, u\in K;
\end{equation}
\item [($H_6$)] $P\in C([0,T]\times [0,1]^2)$, $P$ is nonnegative, for all $t\in [0,T]$
\begin{equation}\label{hyp on P}
\int_0^1P(t,b, a)\,da=1 \quad\text{for }b\in [0,1]\,, \qquad P(t,b,a)=0 \quad \text{if }a<b
\end{equation}
and there exists $L>0$ such that for all $a',a'',b',b''\in [0,1]$ and $t\in [0,T]$
\begin{equation}\label{P lip}
\big| P(t,b',a')-P(t,b'',a'')\big| \le L\, \big( \big|b' - b''\big|+ \big|a' - a''\big|\big).
\end{equation}
\end{itemize}

\subsection{Main result}\label{subsection main result}
We introduce some additional notation. Let $\cM(0,1)$ be the space of signed Radon measures on the interval $(0,1)$. Then  $\cM(0,1)$ is the dual of $C([0,1])$, and  
$\mu :\Omega\times (0,T)\to \cM(0,1)$ is said to be weakly$^*$ measurable if for any $\rho\in C([0,1])$ the map 
\begin{equation} \label{weak*}
	(x,t)\mapsto \int \rho(a) d\mu_{x,t}(a)
\end{equation}
is measurable in  $\Omega\times (0,T)$. We say that  
$$ f \in \mathcal L(\Omega;  C([0,T]; X_{[0,1]})) $$ 
if $f \in C([0,T]; X_{[0,1]}))$  for a.e.~$x\in \Omega$ and $f$ is weakly$^*$ measurable as a function from $\Omega\times (0,T)$ in $\cM(0,1)$. In particular, if $f\in \mathcal L(\Omega;  C([0,T]; X_{[0,1]}))$, then, by the Fubini-Tonelli Theorem, for all $\psi\in C([0,1]\times \overline \Omega\times [0,T])$
$$ x\mapsto \int_0^T\left(\int \psi(a,x,t)\,df_{x,t}(a)\right)dt \quad\text{belongs to }L^\infty(\Omega). $$

\begin{definition}\label{definition of solution 2}
A $(N+1)$-ple $(f,u_1,\cdots,u_N)$ is called a solution of problem \eqref{complete system}-\eqref{IBC} in $[0,T]$ if
\begin{itemize}
\item[$(i)$] $f\in\mathcal{L}(\Omega; C([0,T]; X_{[0,1]}))$;
\item[$(ii)$] $u_i\in C(\overline Q_T)$ and $u_i\ge 0$ in $Q_T$ for $1\le i\le N$;
\item[$(iii)$] the first equation in \eqref{complete system} is satisfied in a  weak sense: for a.e. $x\in\Omega$ 
$$ \int_0^\tau\left(\int (\partial_t\phi+v_x\partial_a\phi)\,df_{x,t}+\int\phi\,dJ_{x,t}\right)dt
	=\int\phi(\cdot,\tau)\,df_{x,\tau}-\int\phi(\cdot,0)\,d(f_0)_x $$
for all $\tau\in [0,T]$ and $\phi\in C^1([0,1]\times [0,T])$, where the function $v$ is defined by \eqref{velocity bis} and the signed measure $J$ by \eqref{def J};
\item[$(iv)$] if $1\le i<N$, $u_i\in L^2([0,T]; H^1(\Omega))$ and
\begin{equation}
	\begin{split}
		d_i\int_0^T & \left[\int_\Omega\nabla u_i(x,s)\cdot\nabla\psi(x,s)dx+\gamma_i\int_{\partial\Omega_1}u_i(x,s)\psi(x,s)d\sigma(x)\right]ds \\ &
		\qquad =\eps\iint_{Q_T}u_i\psi_t+\eps\int_\Omega u_{0i}\psi(x,0)\,dx+\iint_{Q_T}R_i\psi 
	\end{split}
\end{equation}
for all $\psi\in H^1([0,\tau]; H^1(\Omega))$, $\psi(x,\tau)=0$, where $R_i$ is defined as in  \eqref{complete system} and $\mathcal F$ (which is part of $R_1$) by \eqref{mathcal F};
\item[$(v)$] $\partial_t u_{N}\in C(\overline Q_T)$, $u_N(\cdot,0)=u_{0N}$ in $\Omega$, and the equation for $u_N$ in \eqref{complete system} is satisfied in $Q_T$.
\end{itemize}
\end{definition}

\begin{remark}\label{remark def}
\noindent {\bf (a)} It follows from \eqref{def J} and $(H_6)$ that, for a.e. $x\in \Omega$, $\displaystyle{\int} dJ_{x,t}=0$ for $t\in [0,T]$.
\noindent {\bf (b)} The concept of weak solution of the first order transport equation, defined in Definition \ref{definition of solution 2}$(iii)$, needs some explanation. It follows from \eqref{Ga global}-\eqref{Sa global} that,  for a.e.~$x\in \Omega$, $v$ is uniformly Lipschitz continuous with respect to $a$:
\begin{equation}\label{v_a}
	\partial_av_{x}(a,t)=\!\!\int_{[0,1]} \!\partial_a\cG(x,a,b)\, df_{x,t}(b)+\partial_a\cS(x,a,u_1(x,t),\dots,u_{N-1}(x,t))\le 0.
\end{equation}
In particular it follows from $(H_{4-5})$ that, for a.e.~$x\in \Omega$, $v_{x}(1,t)=0$ for $t\in [0,T]$ and $v_{x}(a,t)\ge 0$ for $a\in [0,1]$ and $t\in [0,T]$. This implies that formally the ``flux'' $fv$ vanishes at $a=1$, a condition which is made precise by the choice of continuous test functions $\phi(x,a,t)$ without any restriction at $a=1$. Since $v\ge 0$ at $a=0$, characteristics (see the next section) ``enter the domain $[0,1]$'' at $a=0$; so we need a boundary condition at $a=0$ which, according to Definition \ref{definition of solution 2}$(iii)$, is again the no-flux condition. Actually this is imposed by the condition that $f_{x,\tau}$ is a probability measure in $[0,1]$: choosing $\phi\equiv 1$ it follows from Definition \ref{definition of solution 2}$(iii)$ and Remark \ref{remark def}(a) that for a.e.~$x\in \Omega$
$$ \int df_{x,\tau}=\int  d(f_0)_x =1 \quad \text{for }\tau\in (0,T]. $$
\end{remark}

The main result of the paper is the following well-posedness result.
\begin{theorem}\label{theorem main result}
Let  $\Omega \subset \R^n$ be an open and bounded set with a smooth boundary $\partial \Omega$, which is the disjunct union of $\partial \Omega_0$ and $\partial \Omega_1$. Let $T>0$ and $N\in\mathbb N$, and let hypotheses $(H_{1-6})$ be satisfied. Then  problem \eqref{complete system}-\eqref{IBC} has a unique solution in $[0,T]$ in the sense of Definition \ref{definition of solution 2}.
\end{theorem}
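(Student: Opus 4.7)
The plan is to mirror the Sections~\ref{characteristics}--\ref{section global existence} strategy sketched in the introduction: first reduce the transport equation to an equivalent characteristic formulation, then run a contraction argument to obtain local existence and uniqueness, and finally propagate via a priori bounds to a global solution.

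\emph{Step 1: characteristics.} Fix candidates $(u_1,\dots,u_{N-1})\in C(\overline{Q_T})^{N-1}$ and $f\in\mathcal L(\Omega;C([0,T];X_{[0,1]}))$. By~\eqref{v_a} together with $(H_4)$--$(H_5)$ the field $v_x(a,t)=(v[f])_x(a,t)$ is Lipschitz in $a$ uniformly in $(x,t)$ and satisfies $v_x(0,t)\ge 0$, $v_x(1,t)=0$, so for a.e.~$x\in\Omega$ the ODE $\dot A=v_x(A,t)$, $A(0)=a_0$, admits a unique classical solution $A_x(t;a_0)\in[0,1]$ that never exits the interval. Rewriting the first equation in~\eqref{complete system} along these characteristics yields a Duhamel-type identity
\begin{equation*}
\int\phi\,df_{x,t}=\int\phi(A_x(t;a_0))\,d(f_0)_x(a_0)+\int_0^t\!\int\phi\,dJ[f]_{x,s}\,ds,
\end{equation*}
equivalent to Definition~\ref{definition of solution 2}$(iii)$; this decouples the push-forward of mass from the jump source and furnishes the functional to be iterated below.

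\emph{Step 2: local fixed point.} I would set up a contraction on the product space $\mathcal L(\Omega;C([0,\tau];X_{[0,1]}))\times C(\overline{\Omega\times[0,\tau]})^{N}$, endowed with the metric
\begin{equation*}
\operatorname*{ess\,sup}_{x\in\Omega}\max_{0\le t\le\tau}\cW_1(f^1_{x,t},f^2_{x,t})+\sum_{i=1}^N\|u_i^1-u_i^2\|_{\infty}.
\end{equation*}
The map $\Phi$ sends $(f,u_1,\dots,u_N)$ to $(\tilde f,\tilde u_1,\dots,\tilde u_N)$, where $\tilde f$ is built from the characteristics of $v[f]$ and the jump source via the Duhamel formula, and the $\tilde u_i$ solve the linear parabolic problems (and, for $i=N$, the ODE) with coefficients and sources frozen at the given $(f,u_1,\dots,u_N)$. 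Standard parabolic theory under $(H_1)$--$(H_3)$ produces nonnegative continuous $\tilde u_i$ depending Lipschitz continuously on the data, while $\cF[f]$ is controlled in $\cW_1$ through the Lipschitz test function $(\mu_0+a)(1-a)$; for the measure component, Kantorovich duality combined with the Lipschitz dependence of the characteristics on the velocity (hence on $f$ and on the $u_i$) and with the Lipschitz dependence of $J[\cdot]$ on $f$ provided by~\eqref{P lip} of $(H_6)$ yields the required estimate.

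\emph{Main obstacle.} Exactly the technical point flagged in the introduction: when bounding $\cW_1(\tilde f^1,\tilde f^2)$ by duality against a $1$-Lipschitz test $\phi$, the pullback $\phi\circ A_x^1$ is still Lipschitz in $a_0$ but its constant grows through a Gr\"onwall factor coming from $\partial_av$, and switching to $A_x^2$ requires Lipschitz control of compositions that is incompatible with a plain Banach contraction on a single iterate. The workaround is to iterate $\Phi$ and to show that, for short enough $\tau$, some $\Phi^k$ is a contraction; this is the \emph{ad hoc} variant of the fixed point theorem announced in the introduction, which tracks separately the Lipschitz norm of the test functions and the total variation of the measures.

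\emph{Step 3: global extension.} Once local well-posedness is in hand, I close the argument by deriving a priori bounds independent of the existence interval. The fact that $f_{x,t}$ remains a probability measure on $[0,1]$ follows from $v_x(0,t)\ge 0$, $v_x(1,t)=0$, and Remark~\ref{remark def}(a), so that $\int df_{x,t}=\int d(f_0)_x=1$ for a.e.~$x$ and all $t$. Consequently $\cF[f]$ is bounded by $C_{\cF}(\mu_0+1)$, and standard comparison/Gr\"onwall arguments applied to the Smoluchowski system (exploiting nonnegativity, the linear clearance $-\sigma_m u_m$, the outward Robin condition on $\partial\Omega_1$, and a mass estimate for $\sum_m m u_m$) give uniform $L^\infty$ bounds on $u_1,\dots,u_N$. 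With these bounds $\cS$ is controlled on a fixed compact set independent of $t$, hence the local existence time $\tau$ from Step~2 can be chosen uniformly in the starting time, and the local solution extends to arbitrary $T$.
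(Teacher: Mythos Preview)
Your overall three-step architecture matches the paper, but two of your concrete choices diverge from what the paper actually does, and one of them is a real gap.

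\textbf{The Duhamel formula in Step~1 is incorrect.} The identity you write,
\[
\int\phi\,df_{x,t}=\int\phi(A_x(t;a_0))\,d(f_0)_x(a_0)+\int_0^t\int\phi\,dJ[f]_{x,s}\,ds,
\]
omits the transport of the mass injected by $J$ at intermediate times: a jump at time $s$ lands at some $a$ and is then carried by the flow to $A_x(t;s,a)$ by time $t$, so the integrand in the last term must be $\phi(A_x(t;s,\cdot))$, not $\phi$. This is not cosmetic---it is exactly the coupling that makes the fixed-point estimate delicate. The paper avoids this by \emph{not} iterating on $f$ at all: it introduces the pullback $g_{x,t}$ defined by $f_{x,t}=A_x(\cdot,t)_\# g_{x,t}$ and shows (Theorems~\ref{th equiv 2}--\ref{th equiv 1}) that the transport equation for $f$ is equivalent to an equation for $g$ with \emph{no} transport term, only the jump rewritten through $A$. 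The fixed point is then run on the triple $(A,g,u)$, not on $(f,u)$.

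\textbf{The ``ad hoc'' fixed-point device is not $\Phi^k$.} You guess that the workaround is to iterate and make some power of the map contractive. That is not what the paper does, and it is unclear your version would close: each iteration composes with a characteristic map whose Lipschitz constant carries a Gr\"onwall factor, and these compound rather than shrink. The paper's mechanism is different (Proposition~\ref{contr 1} and the Fixed Point Theorem following it): the map $\cH$ is shown to be a contraction \emph{only on its image} $\cH(\cX_{\tau,\rho})$, because any $\underline A$ in the image is an ODE solution and hence uniformly Lipschitz in $y$ (Remark~\ref{stime A}), which is precisely what the $\cW_1$ duality estimate needs. On the full ball $\cX_{\tau,\rho}$ one only has continuity of $\cH$ from the metric topology to a weaker ($L^1$ on the $A$-component) topology; a two-topology variant of Banach's theorem then suffices to pass to the limit in the iteration. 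Your Step~3 is essentially correct and matches Section~\ref{section global existence}.
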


\section{The characteristics}\label{characteristics}
Let $f\in\mathcal{L}(\Omega;C([0,T]; X_{[0,1]}))$ and $u_i\in C(\overline Q_T)$, and let $v[f]$ be defined by \eqref{velocity bis}. By the Lipschitz continuity of $a\mapsto v_{x}(a,t)$ (see Remark \ref{remark def}(b)), for a.e. $x\in\Omega$ the problem for the characteristic issued from $y\in [0,1]$,
\begin{equation}\label{char}
	\begin{cases} 
		\partial_t A_x(y,t)=v_{x}(A_x(y,t),t) & \text{for } 0<t\le T \\
		A_x(y,0)=y
	\end{cases}
\end{equation}
has a unique solution which satisfies
\begin{equation}\label{properties A}
	\begin{cases}
		0\le A_x(y_1,t)<A_x(y_2,t)\le A_x(1,t)=1 & \text{if } 0\le y_1<y_2\le 1,\,0\le t\le T \\
		A_x(y,t_1)\le A_x(y,t_2) &\text{if } y\in [0,1],\, 0\le t_1\le t_2\le T\,.
	\end{cases}
\end{equation}
Observe that, for a.e.~$x\in \Omega$, the function $y\mapsto A_x(y,t)$ is continuous and
\begin{equation}\label{y-derivative A}
\partial_y A_x(y,t) = \exp\left(\int_0^t \partial_av_{x}(A_x(y,s),s)\,ds\right)>0 \quad \text{for all } t\in [0,T].
\end{equation}
In particular for a.e.~$x\in \Omega$ the function $y\mapsto A_x(y,t)$ is injective for all $t\in [0,T]$.

Below we shall reformulate problem \eqref{complete system}-\eqref{IBC} in terms of the characteristics, but before doing so we prove the following result.

\begin{proposition} \label{supports} 
Let $f\in\mathcal{L}(\Omega;C([0,T];X_{[0,1]}))$ and $u_i\in C(\overline Q_T)$. Let $v[f]$ and $J[f]$ be defined by \eqref{velocity bis} and \eqref{def J}. Let, for a.e.~$x\in \Omega$, $A_x(y,t)$ be the solution of \eqref{char} for any $y\in[0,1]$. If $f$ satisfies \eqref{transport} in the sense of Definition \ref{definition of solution 2}\,$(iii)$, then, for a.e.~$x\in \Omega$,
\begin{equation}\label{claim bis}
	\supp f_{x,t},\ \supp J_{x,t}\subseteq [A_x(0,t),1]\quad\text{for }t\in (0,T].
\end{equation}
\end{proposition}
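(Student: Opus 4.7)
The approach is to reduce the support claim for $J$ to the one for $f$, and to prove the latter by a one-sided Gronwall-type computation: test the weak form of the transport equation against the Lipschitz barrier
\[
\phi(a,t):=\bigl(A_x(0,t)-a\bigr)^+,
\]
suitably mollified to a $C^1$ function $\phi_\varepsilon$ so as to fit Definition \ref{definition of solution 2}(iii), and show that the weak formulation forces $\int\phi(\cdot,\tau)\,df_{x,\tau}\le 0$. Since $\phi(\cdot,\tau)\ge 0$ and $f_{x,\tau}$ is a positive measure, this gives $\phi(\cdot,\tau)\equiv 0$ on $\supp f_{x,\tau}$, i.e.\ $\supp f_{x,\tau}\subseteq[A_x(0,\tau),1]$. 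A convenient regularization is $\phi_\varepsilon(a,t):=\rho_\varepsilon(A_x(0,t)-a)$ with $\rho_\varepsilon\in C^1(\R)$, $\rho_\varepsilon\equiv 0$ on $(-\infty,0]$, $0\le\rho_\varepsilon'\le 1$, and $\rho_\varepsilon(s)\to s^+$ uniformly; continuity of $v_x$ in $(a,t)$—inherited from $(H_{4\text{-}5})$ and the continuity in $t$ of the data $f$, $u_1,\dots,u_{N-1}$—ensures that $t\mapsto A_x(0,t)$ is $C^1$, so $\phi_\varepsilon$ is genuinely $C^1$.

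The proof then hinges on two sign computations. On the \emph{transport side}, direct differentiation yields
\[
\partial_t\phi_\varepsilon+v_x(a,t)\,\partial_a\phi_\varepsilon=\rho_\varepsilon'(A_x(0,t)-a)\bigl(v_x(A_x(0,t),t)-v_x(a,t)\bigr)\le 0,
\]
since $\rho_\varepsilon'\ge 0$ is supported on $\{a<A_x(0,t)\}$, where the monotonicity $\partial_a v_x\le 0$ from \eqref{v_a} forces $v_x(A_x(0,t),t)\le v_x(a,t)$. On the \emph{jump side}, fix $t,b$: by $(H_6)$, $P(t,b,\cdot)$ is a probability density supported in $[b,1]$, and $a\mapsto\phi_\varepsilon(a,t)$ is nonincreasing, so
\[
\int_0^1 P(t,b,a)\,\phi_\varepsilon(a,t)\,da=\int_b^1 P(t,b,a)\,\phi_\varepsilon(a,t)\,da\le\phi_\varepsilon(b,t)\int_b^1 P(t,b,a)\,da=\phi_\varepsilon(b,t).
\]
Integrating in $df_{x,t}(b)$, applying Fubini, and recalling \eqref{def J} together with $\eta,\chi\ge 0$ gives $\int\phi_\varepsilon\,dJ_{x,t}\le 0$.

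Plugging both bounds into Definition \ref{definition of solution 2}(iii) at time $\tau$, and noting $\phi_\varepsilon(\cdot,0)\equiv 0$ (since $A_x(0,0)=0$ and $\rho_\varepsilon$ vanishes on $(-\infty,0]$), yields $\int\phi_\varepsilon(\cdot,\tau)\,df_{x,\tau}\le 0$. Sending $\varepsilon\to 0$ by uniform convergence $\phi_\varepsilon\to\phi$ gives the same inequality for $\phi$, which settles the support statement for $f$. The one for $J$ follows at once from \eqref{def J}: the summand $-df_{x,\tau}$ inherits the support, while the density $a\mapsto\int P(\tau,b,a)\,df_{x,\tau}(b)$ of the other summand vanishes for $a<A_x(0,\tau)$, because the integration effectively runs over $b\ge A_x(0,\tau)$ and $P(\tau,b,a)=0$ whenever $a<b$.

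The main obstacle is really the choice of the right barrier: one needs a test function that (i) encodes ``to the left of the leftmost characteristic'' without artificial cutoffs, (ii) makes the transport operator $\partial_t+v_x\partial_a$ dissipative, and (iii) is monotone enough to exploit the triangular structure of $P$. The pair $(A_x(0,t)-a)^+$ together with the monotonicity $\partial_a v_x\le 0$ from \eqref{v_a} and the order-preserving support condition on $P$ meets all three requirements simultaneously; the $C^1$-mollification is the only routine point, and it is harmless because the non-smoothness of $\phi$ is concentrated on the one-dimensional graph $\{a=A_x(0,t)\}$ and the sign of $\partial_t\phi_\varepsilon+v_x\partial_a\phi_\varepsilon$ is preserved pointwise in $\varepsilon$.
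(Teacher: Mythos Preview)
Your proof is correct and follows the same overall strategy as the paper's---test the weak formulation against a function of the form $\rho(A_x(0,t)-a)$ with $\rho$ vanishing on $(-\infty,0]$, and exploit the triangular structure of $P$ for the $J$-term---but the execution differs in an instructive way.

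The paper chooses $\rho=h_\delta$ a smoothed Heaviside, so that $\psi_\delta\to\mathbf{1}_{\{a<A_x(0,t)\}}$. With this choice the transport contribution does not have a sign; instead the paper shows it \emph{vanishes} as $\delta\to 0$, using the Lipschitz bound $|v_x(A_x(0,t),t)-v_x(a,t)|\le C|A_x(0,t)-a|$ together with the concentration of $h_\delta'$ on an interval of width $\delta$ and a dominated-convergence argument. This yields the exact identity $f_{x,\tau}\bigl([0,A_x(0,\tau))\bigr)=\int_0^\tau J_{x,t}\bigl([0,A_x(0,t))\bigr)\,dt$, and only afterwards is the right-hand side shown to be $\le 0$ from the structure of $J$. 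You instead take $\rho_\varepsilon\to(\,\cdot\,)^+$ and obtain a \emph{sign} on the transport term directly from the monotonicity $\partial_a v_x\le 0$ in \eqref{v_a}, and likewise get a sign on the $J$-term at the $\varepsilon$-level via the monotonicity of $a\mapsto\phi_\varepsilon(a,t)$. This is a little cleaner: it sidesteps the dominated-convergence step and produces $\int(A_x(0,\tau)-a)^+\,df_{x,\tau}\le 0$ in one stroke. Both routes derive the support statement for $J$ by the same final observation about the support of $P(\tau,b,\cdot)$.
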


\begin{proof} 
$A_x$ is well defined for a.e.~$x\in \Omega$. We fix such $x$ and also $\tau\in (0,T]$. Let $h\in C^1(\R)$ be nondecreasing and satisfy $h\equiv 0$ in $(-\infty,0]$ and $h\equiv 1$ in $[1,\infty)$. Let $\delta>0$ and set for a.e.~$x\in \Omega$
$$
h_\delta(s)=h(s/\delta) \ \text{for }s\in \R; \quad
\psi_\delta(a,t)=h_\delta(A_x(0,t)-a)\ \text{for } a\in[0,1], \, t\in[0,T].
$$
Then $\psi_\delta$ is of class $C^1$ and
$$
\partial_a\psi_\delta=-\frac 1\delta h'\left(\frac {A_x(0,t)-a}{\delta}\right), \quad \partial_t\psi_\delta
=\frac 1\delta 
v_{x}(A_x(0,t),t)
h'\left(\frac {A_x(0,t)-a}{\delta}\right).
$$

We use $\psi_\delta$ as a test function in Definition \ref{definition of solution 2}\,$(iii)$. Since $A_x(0,0)=0$, $\psi_\delta(a,0)=0$ if $a\ge 0$ and
$\int \psi_\delta(\cdot,0)\,d(f_0)_x=0$. Therefore the test function relation implies that
\begin{equation}\label{july 11 eq:3} 
\int \psi_\delta(\cdot,\tau)
\,df_{x,\tau}
- \int_0^\tau\left(\int \psi_\delta 
\, dJ_{x,t}
\right)dt\to 0 \quad\text{as }\delta\to 0
\end{equation}
if we prove that  
\begin{equation}\label{july 11 eq:2}
\int_0^\tau\left(\int (\partial_t\psi_\delta+v_x\partial_a\psi_\delta)
\, df_{x,t}
\right)dt
\to 0\qquad \mbox{as $\delta \to 0$.}
\end{equation}

To prove \eqref{july 11 eq:2} we observe that 
$$
|\partial_t\psi_\delta+v_x\partial_a\psi_\delta|=\left|\frac {
v_{x}(A_x(0,t),t)\!-\!v_{x}(a,t)
}\delta h'\left(\frac {A_x(0,t)\!-\!a}{\delta}\right)\right|
\le C\sup_{s\in \R}|sh'(s)|
$$
for some constant $C$ which does not depend on $\delta$, whence
\begin{equation}\label{july 11, eq:1}
\begin{split}
&\left|
\int_0^\tau \left(\int (\partial_t\psi_\delta+v_x\partial_a\psi_\delta)
\, df_{x,t}
\right)dt\right|
\\&\qquad \le
C \int_0^\tau\left(\int 
df_{x,t}
\res(A_x(0,t)-\delta,A_x(0,t))\cap[0,A_x(0,t))\right)dt.
\end{split}
\end{equation}
Here and in the following, the symbol $\res$ denotes the restriction of a measure to a measurable subset, see \cite[Definition 1.8]{mattila}. Since $\bigcap\limits_{\delta>0} \left(A_x(0,t)-\delta,A_x(0,t)\right)\cap[0,A_x(0,t))= \emptyset$ and
$$
\left| \int 
df_{x,t}
\res (A_x(0,t)-\delta,A_x(0,t))\cap[0,A_x(0,t))\right| \le 1 \quad \text{for }t\in [0,\tau],
$$
\eqref{july 11 eq:2} follows from \eqref{july 11, eq:1} and the Dominated Convergence Theorem.

By \eqref{july 11 eq:3} and the Dominated Convergence Theorem,
\begin{equation}\label{july 17 eq:1}
\int 
df_{x,\tau}
\res [0,A_x(0,\tau))= \int_0^\tau\left(\int 
dJ_{x,t}
\res[0,A_x(0,t))\right)dt\,.
\end{equation}
It follows from \eqref{def J},  the Fubini-Tonelli Theorem and \eqref{hyp on P} that
\begin{equation*}
\begin{aligned}
&\int  
dJ_{x,t}(a)
\res [0,A_x(0,t)) \\
&\quad =\eta(t)\chi(x,t)
\!\left[\int \!\!\left(\int_{0}^{A_x(0,t)} \!\!\!P(t,b, a)\,da\!\right) \!
d f_{x,t}(b)
\!-\!\!\int \!
df_{x,t}(a)
\res [0,A_x(0,t))\right]\\
& \quad =  \eta
\chi\left[ \int\left(\int_b^{A_x(0,t)}\!\!\!P(t,b, a)\, da \right)
d f_{x,t}(b)
\res[0,A_x(0,t))   -\int 
df_{x,t}(a)
\res [0,A_x(0,t))\right]\\
& \quad \le  \! \eta\chi\!\left[ \int \! 
d f_{x,t}(b)
\res [0,A_x(0,t))\!-\!\int \!
df_{x,t}(a)
\res [0,A_x(0,t))\right]\!=0.\\
\end{aligned}
\end{equation*}
Combined with \eqref{july 17 eq:1}, this implies  \eqref{claim bis}.
\end{proof}

We now reformulate the original problem in terms of the characteristics. Specifically, we shall see below that the measure $f$ can be obtained by transporting along the characteristics a suitable measure $g$ (i.e., $f$ is the push forward of $g$ through $A$, cf. Definition \ref{def:push_forward}), which satisfies:
\begin{equation}\label{system rewritten}
\begin{cases}
\medskip \partial_t 
A_x(y,t)
\!=\!\!\displaystyle {\int}\!
\cG_{x}(A_{x}(y,t), A_{x}(\xi,t))
\,dg_{x,t}(\xi)
\!+\!\cS(x, A_x(y,t),u_1,\dots,u_{N-1})\\
\smallskip
\partial_t 
g_{x,t}(y)
=\eta\chi \! \left[\partial_y 
A_{x}(y,t)
\displaystyle{\int}  \!\! 
P(t,A_{x}(\xi,t),  A_{x}(y,t))
\,dg_{x,t}(\xi)\!-\!g_{x,t}(y)
\right]\\
\smallskip \eps\partial_tu_1\!-\!d_1\Delta u_1\!=\tilde R_1:=-u_1\!\sum\limits_{j=1}^{N}a_{1,j}u_j\!-\!\sigma_1u_1
+C_{\mathcal F}\!\displaystyle {\int}\!(\mu_0\!+\! A_{x}(\xi,t))(1\!-\! A_{x}(\xi,t)) 
\,dg_{x,t}(\xi)\\
\smallskip \eps\partial_tu_m-d_m\Delta u_m=\tilde R_m:=\tfrac{1}{2}\sum\limits_{j=1}^{m-1}a_{j,m-j}u_ju_{m-j} 
-u_m\sum\limits_{j=1}^{N}a_{m,j}u_j-\sigma_m u_m  \\
\eps\partial_tu_N=\tfrac{1}{2}\sum\limits_{\substack{j+k\geq N \\ k,\,j<N}}a_{j,k}u_ju_{k},\\
\end{cases}
\end{equation}
where $x\in\Omega$, $y\in[0,1]$, $t\in (0,T]$ and $2\leq m<N$, with initial-boundary conditions
\begin{equation}\label{IC+BC}
\begin{cases}
\smallskip 
g_{x,0}(y)=(f_0)_x(y),
\  
A_{x}(y,0)
= y &\text{if }x\in \Omega, \ 0 \le y\le 1\\
\smallskip u_i(x,0)=u_{0i}(x) &\text{if }x\in \Omega, \ 1\le i\le N\\
\smallskip \partial_n u_i(x,t)=0&\text{if }x\in \partial \Omega_0, \ t\in (0,T], \ 1\le i< N\\
\partial_n u_i(x,t)=-\gamma_i u_i(x,t)&\text{if }x\in \partial \Omega_1,\ t\in (0,T], \ 1\le i< N.
\end{cases}
\end{equation}

\begin{definition}\label{definition of solution}
The $(N+2)$-ple $(A,g,u_1,\cdots,u_N)$ is called a solution of problem \eqref{system rewritten}-\eqref{IC+BC} in $[0,T]$ if
\begin{itemize}
\item[$(i)$] $g\in\mathcal{L}(\Omega;C([0,T];X_{[0,1]}))$;
\smallskip
\item[$(ii)$] $A,\partial_tA\in L^\infty(\Omega; C([0,1]\times [0,T];[0,1]))$;
\smallskip
\item[$(iii)$] $u_i\in C(\overline Q_T)$ and $u_i\ge 0$ in $Q_T$ for $1\le i\le N$;
\smallskip
\item[$(iv)$] for a.e.~$x\in \Omega$,  $A_x$ satisfies \eqref{system rewritten}$_1$
and $A_{x}(y,0)=y$ for $y\in [0,1]$;
\item[$(v)$] 
equation \eqref{system rewritten}$_2$ for $g$ is satisfied in a weak sense for a.e.~$x\in \Omega$: for all $\tau\in (0,T]$
and $\phi
 \in C([0,1]\times [0,T])$ with $\partial_t \phi\in C([0,1]\times [0,T])$ 
\begin{equation}\label{weak sol measures}\begin{split}
&\quad\int\!
\phi(y,\tau)
\,dg_{x,\tau}(y)
\! -\! \int\!
\phi(y,0)
\,d(f_0)_x(y)
\!-\! \int_0^\tau\!\!\left(  \int \partial_t \phi(y,t)
\, dg_{x,t} (y)
\!\right)\! dt
\\&
\quad=\!\int_0^\tau \!\! \!\eta
\chi
\!\left[\int_0^1 \phi(y,t)\partial_y 
A_{x}(y,t)
\! \left(\displaystyle{\int}  \!\! P(t,A_{x}(\xi,t),  A_{x}(y,t))
 \,dg_{x,t}(\xi)
 \!\right)\!dy
\!-\!\!\!\int\! 
\phi(y,t)
\, dg_{x,t} (y)
\right]\!\!dt;
\end{split}\end{equation}
\item[$(vi)$] if $1\le i<N$, $u_i\in L^2([0,T]; H^1(\Omega))$ and
\begin{equation}\begin{split} 
d_i \int_0^T&
\left[ \int_\Omega \!\nabla  u_i(x,s) \!\cdot\! \nabla \psi(x,s) dx| 
+  \gamma_i\!\int_{\partial\Omega_1}\!\!\!  u_i(x,s)\psi (x,s) d\sigma(x) \right] ds
\\&
\qquad =\eps\!\iint_{Q_T}  \!\!u_i\partial_t \psi +\eps\int_\Omega u_{0i}\psi(x,0)\, dx + \iint_{Q_T}  \tilde R_i \psi 
\end{split}\end{equation}
for all $\psi\in H^1([0,\tau]; H^1(\Omega))$, $\psi(x,\tau)=0$, where $\tilde R_i$ is defined as in  \eqref{system rewritten};
\smallskip
\item[$(vii)$] $\partial_t u_{N}\in C(\overline Q_T)$, $u_N(\cdot,0)=u_{0N}$ in $\Omega$, and the equation for $u_N$ 
in  \eqref{system rewritten} is satisfied in $Q_T$. 
\end{itemize}
\end{definition}

In the remainder of this section we prove the equivalence of problems \eqref{complete system}-\eqref{IBC} and \eqref{system rewritten}-\eqref{IC+BC}. The following result is a first step in this direction.

\begin{theorem}\label{th equiv 2} 
Let hypotheses $(H_{1-6})$ be satisfied. Let $(A,g,u_1,\cdots,u_N)$ be a solution of 
\eqref{system rewritten}-\eqref{IC+BC} in $[0,T]$ and  set, for a.e.~$x\in \Omega$,
$$ 
f_{x,t}
:=A_x(\cdot,t)_\# 
g_{x,t}
\quad\text{for all }t\in [0,T]. $$
Then $(f,u_1,\dots,u_N)$ is a solution of problem \eqref{complete system}-\eqref{IBC} in $[0,T]$. 
\end{theorem}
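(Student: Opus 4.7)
The plan is to verify, one by one, the five conditions of Definition \ref{definition of solution 2} for $f_{x,t}:=A_x(\cdot,t)_\# g_{x,t}$. Since push-forward preserves probability measures, $f_{x,t}\in X_{[0,1]}$ for a.e.\ $x\in\Omega$ and every $t\in[0,T]$. To obtain $f\in\mathcal{L}(\Omega;C([0,T];X_{[0,1]}))$ I will use the identity $\int\rho(a)\,df_{x,t}(a)=\int\rho(A_x(y,t))\,dg_{x,t}(y)$ valid for every $\rho\in C([0,1])$: weak$^*$ measurability of $f$ in $(x,t)$ follows from weak$^*$ measurability of $g$ together with the measurability of $(x,t,y)\mapsto A_x(y,t)$, whereas continuity of $t\mapsto f_{x,t}$ in $\cW_1$ follows from continuity of $t\mapsto g_{x,t}$ in $\cW_1$, continuity of $A_x(y,t)$ in $t$, and its uniform Lipschitz dependence on $y$ (guaranteed by \eqref{y-derivative A} combined with $(H_{4-5})$).

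Items (ii) and (v) of Definition \ref{definition of solution 2} coincide verbatim with (iii) and (vii) of Definition \ref{definition of solution}. The matching of (iv) with (vi) for $2\le i<N$ is immediate because $R_i=\tilde R_i$ in that range. For $i=1$, the push-forward identity applied to $a\mapsto(\mu_0+a)(1-a)\in C([0,1])$ gives
\begin{equation*}
\cF[f](x,t)=C_{\cF}\int_0^1(\mu_0+a)(1-a)\,df_{x,t}(a)=C_{\cF}\int(\mu_0+A_x(\xi,t))(1-A_x(\xi,t))\,dg_{x,t}(\xi),
\end{equation*}
so $R_1=\tilde R_1$ and (iv) for $i=1$ follows from (vi).

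The substantial content is the verification of (iii). Given $\phi\in C^1([0,1]\times[0,T])$, I will plug the lifted test function $\tilde\phi(y,t):=\phi(A_x(y,t),t)$ into the weak formulation \eqref{weak sol measures}. Differentiating and using \eqref{system rewritten}$_1$, $\partial_t\tilde\phi(y,t)=(\partial_t\phi+v_x\partial_a\phi)(A_x(y,t),t)$, so the push-forward identity rewrites the left-hand side of \eqref{weak sol measures} exactly as the left-hand side of the identity required in (iii) (using $A_x(y,0)=y$ to identify the initial integrals with respect to $(f_0)_x$). It remains to match the right-hand side of \eqref{weak sol measures} with $\int_0^\tau\!\int\phi\,dJ_{x,t}\,dt$: the loss term $-\eta\chi\int\tilde\phi\,dg_{x,t}$ corresponds via push-forward to $-\eta\chi\int\phi\,df_{x,t}$, and for the gain term I will apply Fubini to $\int_0^1\phi(a,t)\bigl(\int P(t,b,a)\,df_{x,t}(b)\bigr)da$, push forward the outer integration in $b$, and then perform the change of variables $a=A_x(y,t)$ with Jacobian $\partial_yA_x(y,t)>0$ from \eqref{y-derivative A}. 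The change of variables is legitimate because $y\mapsto A_x(y,t)$ is a $C^1$ strictly increasing bijection from $[0,1]$ onto $[A_x(0,t),1]$ by \eqref{properties A}, and the contribution from $a\in[0,A_x(0,t))$ vanishes because for any $\xi\in[0,1]$ one has $A_x(\xi,t)\ge A_x(0,t)>a$ there, so the triangular support clause in \eqref{hyp on P} forces $P(t,A_x(\xi,t),a)=0$.

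The main obstacle I anticipate is the careful bookkeeping in the gain term: one must simultaneously exploit the support hypothesis on $P$ from $(H_6)$, the strict monotonicity of $A_x(\cdot,t)$ from \eqref{properties A}, and the positivity of $\partial_yA_x$ from \eqref{y-derivative A}, while checking that Fubini is applicable (which follows from the continuity of $P$, $\phi$ and $A_x$ on compact sets, together with $f_{x,t}$ being a probability measure). Once the gain term matches, everything collapses to the desired equation, and nonnegativity and continuity of the $u_i$'s, together with the boundary conditions encoded in the variational formulation, are inherited directly from the hypotheses on $(A,g,u_1,\dots,u_N)$.
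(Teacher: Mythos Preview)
Your proposal is correct and follows essentially the same strategy as the paper: both plug the lifted test function $\phi(A_x(y,t),t)$ into the weak formulation \eqref{weak sol measures} for $g$, use the push-forward identity and the change of variables $a=A_x(y,t)$ (with Jacobian $\partial_yA_x>0$) in the gain term, and invoke the support condition on $P$ from $(H_6)$ to kill the contribution from $a\in[0,A_x(0,t))$. Your treatment of the transport term is in fact slightly more streamlined than the paper's, since you compute $\partial_t\bigl(\phi\circ A_x\bigr)=(\partial_t\phi+v_x\partial_a\phi)\circ A_x$ directly via the chain rule and the identity $\partial_tA_x=v_x(A_x,\cdot)$ (itself an application of push-forward to \eqref{system rewritten}$_1$), whereas the paper introduces the inverse $B_x=A_x^{-1}(\cdot,t)$ and derives the relation $\partial_tB_x=-v_x\,\partial_aB_x$ before reaching the same conclusion.
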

\begin{proof}
Since, for a.e.~$x\in \Omega$, 
$g_{x,t}$
is a Borel regular probability measure in $[0,1]$ for $t\in [0,T]$, so is 
$f_{x,t}$.
By \eqref{y-derivative A}, for a.e.~$x\in \Omega$ the function $y\to  A_x(y,t)$ is injective for $t\in [0,T]$, so that, by \cite{mattila}, Theorem 1.18, 
\begin{equation}\label{july 19 eq:1}
\mathrm{supp}\, 
f_{x,t}
= A_x(\mathrm{supp}\,
g_{x,t}
,t)\subseteq A_x([0,1],t)=[A_x(0,t),1].
\end{equation}
In particular $
f_{x,t}\in X_{[0,1]}$ for a.e.~$x\in \Omega$. In addition, by Remark \ref{remark def}\,(b) and Proposition \ref{cont meas}, the map $t \mapsto 
f_{x,t}
$ belongs to $C([0,T]; X_{[0,1]})$ for a.e.~$x\in\Omega$.

Let, for a.e.~$x\in \Omega$, $v$ be defined by \eqref{velocity bis} and $J$  by \eqref{def J}. 
By \eqref{july 19 eq:1} and \eqref{hyp on P}, 
$\displaystyle{\int}  P(t,b, a)
\, df_{x,t}(b)
= 0$ if $a<A_x(0,t)$, whence
\begin{equation}\label{supp J}
\supp 
J_{x,t}
\subset [A_x(0,t),1].
\end{equation}

To avoid cumbersome notations, we set $B_x(\cdot, t):= A_x^{-1}(\cdot, t)$. Since 
$$
A_x(\cdot,t): [0,1] \to [A_x(0,t), A_x(1,t)]=[A_x(0,t), 1],
$$
$B_x(\cdot, t)$ is well defined in $[A_x(0,t),1]$, $ B_x(A_x(y,t),t)\equiv y$ for $y\in[0,1]$,
and $ A_x(B_x(a,t),t) \equiv a$ for $a\in [A_x(0,t), 1]$.
Since        
$\mathrm{supp}\,
f_{x,t}
\subset [A_x(0,t),1]$, integrals of functions of $B(\cdot,t)$ with respect to
$f_{x,t}$
are well defined.  

By Definition \ref{definition of solution}\,$(iv)$, $\partial_t  A_x(y,t)=
v_{x}(A_x(y,t),t)
$ for a.e. $x\in \Omega$.
By \eqref{y-derivative A}, $B_x$ is Lipschitz continuous with respect to $y$ for a.e. $x\in \Omega$.
Differentiating the identity $A_x(B_x(y,t),t)=y$ with respect to $t$ and $y$, we obtain that 
\begin{equation}\label{5_2 eq:5}
\begin{cases}
\partial_y A_x(B_x(a,t),t)\partial_t B_x(a,t) =-\partial_t  A_x(B_x(a,t),t)
= -v_{x}(A_x(B_x(a,t)),t)  = - v_{x}(a,t)
\\
\partial_y   A_x(B_x(y,t),t)\partial_y B_x(y,t)=1,
\end{cases}\end{equation}
so that
$\partial_t B_x(y,t)\partial_y   A_x(B_x(y,t),t)\partial_y B_x(y,t) 
= \partial_t B_x(y,t)$ and, by  \eqref{5_2 eq:5},
\begin{equation}\label{5_2 eq:8}
\partial_t B_x(y,t)=-v_{x}(y,t) \partial_y B_x(y,t).
\end{equation}

Let $\psi\in C^1([0,1] \times [0,T])$. Let $x$ be fixed such that $A_x,\partial_t A_x\in C([0,T]; X_{[0,t]})$, and set 
$$ \phi(y,t)=\psi(A_x(y,t),t)\quad \text{for }y\in [0,1] $$
and 
\begin{equation*}
C_{\phi}=-\!\int\!\phi(y,\tau) 
\,dg_{x,\tau}(y)
+\! \int\!\phi(y,0) 
\,d(f_0)_x(y)
= -\!\int\!\phi(B_x(a,\tau),\tau) 
\,df_{x,\tau}(a)
+ \!\int\!\phi(a,0) 
\, d(f_0)_x(a).
\end{equation*}
Since $\phi$ satisfies the conditions in Definition \ref{definition of solution}$(v)$, it follows that
\begin{equation}\label{weak sol}\begin{split}
&- \int_0^T\left(  \int \phi_t(y,t) \, dg_{x,t} (y)
\right) dt
\\&
\ =\!\int_0^T \!\! \!\eta\chi\!\left[\int_0^1\phi(y,t)\partial_y A_x(y,t)\!
 \left(\displaystyle{\int}  \!\! P(t,A_x(\xi,t),A_x(y,t))
 \, dg_{x,t}(\xi)
 \!\right)\!dy\!
- \!\!  \int\! \phi(y,t)  \,dg_{x,t} (y)
\right]dt\!+\! C_\phi
\\&
\ =\!\int_0^T \!\! \!\eta\chi\!\!\left[\int_0^1\!\! \phi(B_x(A_x(y,t),t)\partial_y A_x(y,t)\!\!
 \left(\displaystyle{\int}  \!\! P(t,b,A_x(y,t))
 \, df_{x,t}(b)
 \!\!\right)\!dy\right.
 \\&
\qquad\qquad\qquad\qquad\qquad\qquad\qquad\qquad\qquad\qquad\qquad - \!\left.  \int\! \phi(B_x(A_x(y,t),t)
\,dg_{x,t} (y)
\right]dt+ C_\phi
\\&
\ =\!\int_0^T \!\! \!\eta\chi\!\!\left[\int_{ A_x(0,t)}^{ A_x(1,t)}\!\! \phi(B_x(a,t),t) \! \left(\displaystyle{\int}  \!\! P(t,b, a)
\,df_{x,t}(b)
\right)da\!
- \!\!  \int\! \phi(B_x(A_x(y,t),t)
\,dg_{x,t} (y)
\right]dt\!+\! C_{\phi},
\end{split}\end{equation}
where we have used Theorem 1.19 in \cite{mattila} and the relation $da =\partial_y A(x,y,t)\, dy$. 

On the other hand, the left-hand side of \eqref{weak sol} can be written as
\begin{equation}\label{5_2 eq:3}
- \int_0^T\left(  \int \phi_t(y,t) \,dg_{x,t} (y)\right) dt
= -  \int_0^T\left(  \int \phi_t(B_x(a,t),t) \,df_{x,t} (a)
\right) dt.
\end{equation}

Let $a\in [ A_x(0,t),1]$. Then
$\psi(a,t)= \phi(B_x(a,t),t)$
and 
\begin{equation}\label{5_2 eq:1}
\partial_t \psi (a,t) =
\partial_y\phi (B_x(a,t),t)\partial_t B_x(a,t)
+ \phi_t(B_x(a,t),t).
\end{equation}
Since, by \eqref{july 19 eq:1}, $\supp 
f_{x,t}
\subseteq [A_x(0,t),1]$, it follows from  \eqref{weak sol}-\eqref{5_2 eq:1} that 
\begin{equation}\label{5_2 eq:4}\begin{split}
- \int_0^T &\left(\int \partial_t \psi (a,t)\,df_{x,t}(a)
\right) dt =
-\int_0^T \left(\int \partial_y\phi (B_x(a,t),t)\partial_t B_x(a,t)\,df_{x,t}(a)
\right) dt 
\\&+
\int_0^T \eta\chi\left[
\int_{ A_x(0,t)}^{ A_x(1,t)}\phi(B_x(a,t),t) \! \left(\displaystyle{\int}  \!\! P(t,b, a)
\,df_{x,t}(b)
\right)da\right.
\\&
\qquad\qquad \qquad\qquad\qquad\qquad\qquad \qquad \qquad -   \left.\int \phi(B_x(a,t),t)   \,df_{x,t} (a)
\right]\, dt + C_\phi
 \\=&
-\int_0^T \left(\int \partial_y\phi (B(x,a,t),t)\partial_t B(x,a,t)
\,df_{x,t}(a)
\right) dt 
\\&+
\int_0^T \!\eta \chi\!
\left[\int_{A_x(0,t)}^{A_x(1,t)}\! \psi(a,t) \! 
\left(\displaystyle{\int}  \!\! P(t,b, a)\,df_{x,t}(b)\right)
\!da-\!\int \!\psi(a,t)  \,df_{x,t}(a)\right]
\, dt + C_\phi
\\=&
-\int_0^T \!\!\left(\int \!\partial_y\phi (B_x(a,t),t)\partial_t B_x(a,t)\,df_{x,t}(a)\!\right) dt \!
+\!\! \int_0^T \!\left( \int \!\psi(a,t) \,dJ_{x,t}(a)\! \right) dt + C_\phi.
\end{split}\end{equation}
By \eqref{5_2 eq:8},
$$
\begin{aligned}
\int \partial_y\phi (B_x(a,t),t)\partial_t B_x(a,t)\,df_{x,t}(a)
&=\int \partial_y\phi (B_x(a,t),t) \partial_a B_x(a,t)  
v_{x}(a,t) \,df_{x,t}(a)
\\& 
=\int \partial_a \psi (a,t)   v_{x}(a,t) \,df_{x,t}(a),
\end{aligned}
$$
whence, by \eqref{5_2 eq:4},  the first equation in \eqref{complete system} is satisfied in the sense of distributions:
\begin{equation*}\begin{split}
&- \int_0^T \!\!\left(\int \partial_t \psi (a,t)\,df_{x,t}(a)
\!\right)\!\! dt 
=\!\int_0^T\! \!\left(\int \partial_a \psi (a,t)   v_{x}(a,t)\,df_{x,t}(a)
\right) dt 
\\&\qquad\qquad+\int_0^T \left( \int \psi(a,t) \,dJ_{x,t}(a)
\right)\, dt
\!-\!\int\!\psi(a,\tau) \,df_{x,\tau}(y)
\!+\! \int\!\psi (a,0) \,d(f_0)_x(a).
\end{split}\end{equation*}

Concerning the Smoluchowski system in \eqref{complete system},
ii is enough to observe that the third equation in \eqref{system rewritten} and the second
equation in \eqref{complete system} coincide, since
\begin{equation*}\begin{split}
\displaystyle {\int}(\mu_0+ A_x(\xi,t))(1- A_x(\xi,t))\,dg_{x,t}(\xi)
=\displaystyle {\int}(\mu_0+a)(1-a)
\,df_{x,t}(a).
\end{split}\end{equation*}
Here we have used again Theorem 1.19 in \cite{mattila}.
\end{proof}

The proof that problems \eqref{complete system}-\eqref{IBC} and \eqref{system rewritten}-\eqref{IC+BC}
are equivalent is completed by the following result.

\begin{theorem}\label{th equiv 1} Let $(f,u_1,\cdots,u_N)$ be a solution of 
\eqref{complete system}-\eqref{IBC} in $[0,T]$ and let $A_x(y,t)$ be defined
by \eqref{char}. Then there exists a probability measure $g_{x,t}$ such that
$$
f_{x,t}:= A_x(\cdot,t)_\# g_{x,t},
$$
and $(A,g,u_1,\dots,u_N)$ is a solution of problem \eqref{system rewritten}-\eqref{IC+BC} in $[0,T]$. 
\end{theorem}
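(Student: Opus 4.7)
The plan is to define $g_{x,t}$ as the pull-back of $f_{x,t}$ along the characteristic map and then check items $(i)$--$(vii)$ of Definition~\ref{definition of solution} in turn.

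\textbf{Construction and regularity.} For a.e.~$x\in\Omega$, the map $y\mapsto A_x(y,t)$ is a continuous strictly increasing bijection from $[0,1]$ onto $[A_x(0,t),1]$ by \eqref{properties A}--\eqref{y-derivative A}; let $B_x(\cdot,t)$ denote its inverse. By Proposition~\ref{supports}, $\supp f_{x,t}\subseteq [A_x(0,t),1]$, so the push-forward
\[ g_{x,t}:=B_x(\cdot,t)_\# f_{x,t} \]
is a well-defined Borel probability measure on $[0,1]$, with $A_x(\cdot,t)_\# g_{x,t}=f_{x,t}$ by construction. Weak$^*$ measurability in $x$ and continuity of $t\mapsto g_{x,t}$ in $X_{[0,1]}$ are inherited from the corresponding properties of $f$ through the identity $\int\rho\,dg_{x,t}=\int\rho(B_x(a,t))\,df_{x,t}(a)$ for $\rho\in C([0,1])$, together with Proposition~\ref{cont meas} (which identifies narrow convergence with $\cW_1$-convergence on $[0,1]$). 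This establishes~$(i)$; items~$(ii)$ and $(iv)$ follow directly from \eqref{char} and \eqref{properties A}; items~$(iii)$, $(vi)$, $(vii)$ are inherited from their analogs in Definition~\ref{definition of solution 2} once the source term $\cF[f]$ in the $u_1$ equation is rewritten as
\[ \cF[f](x,t)=C_{\cF}\int(\mu_0+A_x(\xi,t))(1-A_x(\xi,t))\,dg_{x,t}(\xi) \]
via the push-forward identity. The same identity applied to \eqref{velocity bis} turns \eqref{char} into \eqref{system rewritten}$_1$.

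\textbf{Weak equation for $g$.} This is the main step. By a density argument---uniform approximation of $\phi$ and $\partial_t\phi$ by $C^1$ functions through mollification in $t$ combined with smoothing of the $t=0$ slice---it suffices to verify \eqref{weak sol measures} for $\phi\in C^1([0,1]\times [0,T])$. For such $\phi$, set $\psi(a,t):=\phi(B_x(a,t),t)$ on $\{a\in [A_x(0,t),1]\}$ and extend it past $a=A_x(0,t)$ in a $C^1$ fashion; since $\supp f_{x,t},\,\supp J_{x,t}\subseteq [A_x(0,t),1]$ (Proposition~\ref{supports}, applied to both $f$ and $J$), the extension does not enter the relevant integrals. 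Plugging this $\psi$ into Definition~\ref{definition of solution 2}$(iii)$, on the common support identity \eqref{5_2 eq:8} gives
\[ \partial_t\psi+v_x\partial_a\psi=\partial_y\phi(B_x,t)\bigl(\partial_tB_x+v_x\partial_aB_x\bigr)+\partial_t\phi(B_x,t)=\partial_t\phi(B_x,t). \]
The change of variables $a=A_x(y,t)$ (Theorem~1.19 of \cite{mattila}, with $da=\partial_yA_x\,dy$) applied to the $J$-contribution, combined with $f_{x,t}=A_x(\cdot,t)_\# g_{x,t}$ used throughout, converts each term of the $f$-weak equation into the corresponding term of \eqref{weak sol measures}; the endpoint at $t=0$ collapses to $\int\phi(\cdot,0)\,d(f_0)_x$ because $B_x(\cdot,0)=\mathrm{id}$.

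\textbf{Main obstacle.} The delicate point is the legitimate use of $\psi$ in Definition~\ref{definition of solution 2}$(iii)$: because $B_x$ is only Lipschitz (not $C^1$) in $a$, the composition $\phi\circ B_x$ is merely Lipschitz in the first variable, so in principle $\psi$ is not immediately admissible. This is handled by a standard mollification of $B_x$ followed by a passage to the limit, exploiting the uniform two-sided bounds on $\partial_yA_x$ from \eqref{y-derivative A} and the $L^\infty$ bound on $\partial_av_x$ from \eqref{v_a} to keep all integrals under control; the same machinery justifies the initial density step that reduces the test class to $C^1$.
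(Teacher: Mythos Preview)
Your proposal is correct and follows essentially the same route as the paper: construct $g_{x,t}$ as the pull-back of $f_{x,t}$ via the inverse characteristic $B_x(\cdot,t)$ (the paper invokes Theorem~1.20 in \cite{mattila} for this), use Corollary~\ref{cont meas} for continuity, and then run the computations of Theorem~\ref{th equiv 2} backwards. The paper literally says ``it is enough to check the identities in the proof of Theorem~\ref{th equiv 2} in the opposite direction,'' and you have spelled out what that means; in particular you correctly flag, and resolve by mollification, the regularity mismatch (the test class for $f$ is $C^1$ while $\psi=\phi\circ B_x$ is only Lipschitz in $a$), a point the paper passes over in silence.
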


\begin{proof} As before  we reason for a.e.~$x\in \Omega$. 
Fixing such  $x\in \Omega$, and also an arbitrary $t\in [0,T]$, we consider the map
$$
A_x(\cdot,t): [0,1]\mapsto [A_x(0,t),1].
$$
By Proposition \ref{supports}, $
f_{x,t} = f_{x,t}
\res [A_x(0,t),1]$. Hence, by Theorem 1.20 in \cite{mattila}, 
there exists a Radon measure 
$g_{x,t}$
on $[0,1]$ such that
$$
f_{x,t} = f_{x,t}
\res [A_x(0,t),1] =  A_x(\cdot,t)_\#  g_{x,t}.
$$
Obviously $g_{x,t}$ is a probability measure and belongs to $X_{[0,1]}$. By Corollary \ref{cont meas}, the map $t\mapsto g_{x,t}$ is continuous with respect to the Wasserstein metric.
In addition, 
$g_{x,t}\to (f_0)_x$ as $t\to 0$
since $A_x(y,0)=y$. Therefore $g$ satisfies the qualitative assumptions in order to be a solution of \eqref{system rewritten} and \eqref{IC+BC}.

To complete the proof of the theorem, it is enough to check the identities in the proof of Theorem \ref{th equiv 2} in the opposite direction.
\end{proof}

\section{Local existence and uniqueness}\label{section local existence}
By Theorems~\ref{th equiv 2} and \ref{th equiv 1}, problems  \eqref{complete system}-\eqref{IBC} and \eqref{system rewritten}-\eqref{IC+BC} are equivalent. In this section we prove  
local (w.r.t.~$t$) existence and uniqueness of a solution of problem \eqref{system rewritten}-\eqref{IC+BC}. In section \ref{section global existence} we shall show that this solution can be continued in $[0,T]$, which completes the proof of the main result, Theorem \ref{theorem main result}.

So in this section we have to prove:
\begin{theorem}\label{theorem local existence}
Let  $\Omega \subset \R^n$ be an open and bounded set with a smooth boundary $\partial \Omega$, 
which is the disjunct union of smooth manifolds $\partial \Omega_0$ and $\partial \Omega_1$. 
Let $T>0$ and $N\in\mathbb N$, and let hypotheses $(H_{1-6})$ be satisfied.
Then  there exists $\tau\in (0,T]$ such that  problem \eqref{system rewritten}-\eqref{IC+BC} has a unique solution in $[0,\tau]$.
\end{theorem}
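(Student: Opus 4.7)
The plan is a contraction argument in a closed subset $\cX_\tau$ of the complete metric space of $(N+2)$-tuples $(A, g, u_1, \ldots, u_N)$ satisfying the regularity in Definition \ref{definition of solution} on $[0,\tau]$, equipped with
$$
d\bigl((A, g, u), (A', g', u')\bigr) := \|A - A'\|_\infty + \operatorname*{ess\,sup}_{x\in\Omega} \max_{t\in [0,\tau]} \cW_1(g_{x,t}, g'_{x,t}) + \sum_{i=1}^N \|u_i - u_i'\|_{C(\overline{Q_\tau})}.
$$
Here $\cX_\tau$ is the closed ball in which $A$ is nondecreasing in $t$ and in $y$ with $A\in[0,1]$, each $g_{x,t}$ lies in $X_{[0,1]}$, $u_i\ge 0$, and $\|u_i\|_\infty\le M$ for some $M$ depending on the data. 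Completeness follows from the completeness of $X_{[0,1]}$ recalled in the paper, together with standard Banach-space arguments.

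I would define a map $\Phi:\cX_\tau\to\cX_\tau$ by linearising \eqref{system rewritten} around a frozen input $(\bar A, \bar g, \bar u)\in\cX_\tau$ and solving the resulting subproblems sequentially: (i) build the velocity $\bar v_x(a,t):=\int_{[0,1]}\cG_x(a,\bar A_x(\xi,t))\,d\bar g_{x,t}(\xi)+\cS(x,a,\bar u_1(x,t),\ldots,\bar u_{N-1}(x,t))$ and solve the characteristic ODE $\partial_t A_x=\bar v_x(A_x,t)$, $A_x(y,0)=y$, which is well-posed by the Lipschitz continuity of $\bar v$ in $a$ inherited from $(H_4)$--$(H_5)$; (ii) solve the weak measure equation \eqref{weak sol measures} for $g$, using $A$ from (i) and $\bar g$ inside the jump operator on the right-hand side, a linear problem whose solution stays in $X_{[0,1]}$ by the very normalisation computation used in the proof of Proposition \ref{supports} (i.e.\ \eqref{hyp on P} and the substitution $da=\partial_y A\,dy$); (iii) solve the linear parabolic system for $u_1,\ldots,u_{N-1}$ with sources $\tilde R_i$ built from $\bar g$ and $\bar u$ and boundary conditions from \eqref{IC+BC}, via standard weak-solution theory; (iv) solve the scalar ODE for $u_N$. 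Invariance of $\cX_\tau$ for small $\tau$ follows from Gronwall-type estimates, the facts $\bar v_x(1,\cdot)=0$ and $\bar v_x\ge 0$, and the parabolic maximum principle; the bound $M$ closes on itself provided $\tau$ is short enough.

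For the contraction estimate on $d(\Phi(\bar A^1,\bar g^1,\bar u^1),\Phi(\bar A^2,\bar g^2,\bar u^2))$ I would treat each component separately: the $A$-component by a Gronwall inequality on the ODE, yielding an $O(\tau)$ bound; the $u_i$-components with $i<N$ by the Duhamel formula for the heat equation with Robin conditions combined with a maximum principle, again yielding an $O(\tau)$ bound in $C(\overline{Q_\tau})$; the $u_N$-component by direct integration of its ODE; and the $g$-component by testing \eqref{weak sol measures} against a $1$-Lipschitz function $\phi$ and using \eqref{P lip} together with the uniform Lipschitz regularity of $A_x(\cdot,t)$ provided by \eqref{y-derivative A} to reduce the right-hand side to an $O(\tau)$ multiple of $d$ applied to the inputs.

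The hard part is precisely this last step. The Kantorovich--Rubinstein representation of $\cW_1$ takes a supremum over $1$-Lipschitz test functions $\phi$, but when such a $\phi$ is substituted into \eqref{weak sol measures} the integrand $\phi(y)\,\partial_y A_x(y,t)\,P(t,A_x(\xi,t),A_x(y,t))$ has Lipschitz constant in $\xi$ of the form $\operatorname{Lip}(\phi)\cdot\|\partial_y A_x\|_\infty\cdot L$ rather than $1$, so the naive estimate produces a factor that may be larger than $1$ and can amplify through iteration. Since $A_x(y,0)=y$ forces $\|\partial_y A_x(\cdot,t)\|_\infty\to 1$ as $\tau\to 0$, a direct contraction is recoverable for small $\tau$, but must be handled carefully; the ``ad hoc'' variant of the fixed-point theorem announced in the introduction is designed precisely to sidestep this loss of Lipschitz regularity, for instance by proving contraction of $\Phi$ with respect to a weaker metric on $g$ (such as a bounded-Lipschitz or suitably weighted Wasserstein distance) while retaining continuity in the ambient metric on $\cX_\tau$, and then using completeness of the closed ball in the ambient metric to conclude. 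Once the fixed point exists, uniqueness on $[0,\tau]$ follows by applying the same estimates to the difference of any two candidate solutions.
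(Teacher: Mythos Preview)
Your overall scheme---contraction on a ball of triples $(A,g,u)$ with the Wasserstein metric on the measure component, and a map built from the characteristic ODE, the linear measure equation, and the parabolic system---is exactly the skeleton of the paper's argument (Lemmas~\ref{char febr 7}, \ref{g}, Proposition~\ref{nittka}, Proposition~\ref{contr 1}).

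However, you have located the Lipschitz-regularity obstruction in the wrong place. The $g$-component difficulty you describe is harmless: the extra factor $\|\partial_y A_x\|_\infty\cdot L$ in the Lipschitz constant of the integrand is multiplied by $\tau$ from the time integral, and in any case the $A$ appearing there is the \emph{output} of the ODE step, which is automatically uniformly Lipschitz in $y$ by \eqref{dAdy}. This is exactly how the paper obtains \eqref{May 27 eq:1}. The genuine obstruction is in the $A$-component (and, through the source term $\mathcal F$, in the $u_1$-component). When you Gronwall the difference of characteristic flows you must estimate a term of the form
\[
\int_0^t\left|\int \mathcal G_x\bigl(\,\cdot\,,\bar A^2_x(\xi,s)\bigr)\,d(\bar g^1-\bar g^2)_{x,s}(\xi)\right|ds,
\]
and Kantorovich--Rubinstein controls this by $\cW_1(\bar g^1,\bar g^2)$ only if $\xi\mapsto \bar A^2_x(\xi,s)$ is Lipschitz with a uniform constant. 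A generic element of your ball $\cX_\tau$ is merely continuous and monotone in $y$, so this step does not close; your $\Phi$ is not a contraction on $\cX_\tau$ as defined.

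The paper's resolution is also different from what you sketch. It does not weaken the metric on $g$; rather it observes (Remark~\ref{stime A}) that any $\hat A$ lying in the \emph{image} $\mathcal H(\cX_{\tau,\rho})$ is uniformly Lipschitz in $y$, so $\mathcal H$ is a contraction on $\mathcal H(\cX_{\tau,\rho})$ even though not on all of $\cX_{\tau,\rho}$. To pass to the limit in the iteration one still needs continuity of $\mathcal H$ at the limit point, and the same bad term prevents continuity in the ambient metric. The paper instead proves that $\mathcal H$ is continuous from $\cT_d$ into a weaker topology $\cT$ in which the $A$-component is measured in $L^1$ rather than $L^\infty$, and then invokes the modified fixed-point theorem stated right after Proposition~\ref{contr 1}. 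So the weakening is on $A$, not on $g$, and the contraction is asserted only on the image.
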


The proof is based on a contraction argument. To this purpose we introduce a suitable metric space.

\begin{definition}\label{X spaces} Let $\tau\in (0,T]$ be given. 
We denote by $(\mathcal X_\tau,d)$ the complete metric space
\begin{equation*}
\mathcal X_\tau:=L^{\infty}(\Omega; C([0,1]\times [0,\tau]; [0,1]))\times 
\times
C(\overline{\Omega}\times [0,\tau];\mathbb R^{N}),
\end{equation*}
where $L^{\infty}(\Omega; C([0,1]\times [0,\tau]; [0,1]))$,
  and
$C(\overline{\Omega}\times [0,\tau];\mathbb R^{N})$ are endowed with their natural metrics
as normed spaces, and 
$\mathcal L(\Omega;  C([0,T]; X_{[0,1]}))$ is endowed
with the metric 
$$
\sup_{x\in\Omega}\max_{t\in [0,T]} \mathcal W_1(f_{x,t}, g_{x,t}) \,
$$
(notice that  condition \eqref{weak*} passes to the limit with respect to the $\mathcal W_1$-convergence, by Proposition \ref{wasserstein}).

We denote by $\mathcal X_{\tau,\rho}$ the closed ball in $\mathcal X_\tau$ of radius $\rho>0$ centered at $(y,f_0,u_0)$.
\end{definition}

Observe that, for the moment, we have given up the nonnegativity of $u_i$, which will be recovered during 
the proof of Theorem \ref{theorem local existence}.
For this reason we define $\mathcal S$ also for negative values of $u_i$, by requiring that $\mathcal S$ is even
with respect to $u_i$ for each $i=1,\cdots,N\!-\!1$.

We must construct the map to which we can apply the contraction argument. We shall do this step by step.

\begin{lemma} \label{char febr 7} Let $(\hat A,g,u) \in \cX_T$ and
set, for a.e.~$x\in \Omega$,
\begin{equation}\label{v first}
\hat v_{x}(a,t) 
 := \displaystyle {\int} \cG_x(a,\hat A_x(\xi,t))\, dg_{x,t}(\xi)+\cS(x,a,u_1,\dots,u_{N-1})\ge 0.
\end{equation}
Then, for a.e.~$x\in \Omega$, the Cauchy problem
\begin{equation}\label{cauchy A}
\begin{cases} 
\partial_t \underline A_x(y,t)=\hat v_{x}(\underline A_x(y,t),t) &\text{for }t>0\\
\underline A_x(y,0)=y\in [0,1]
\end{cases}
\end{equation}
has a unique solution defined for all $t\in (0,T]$, and
the function $y\mapsto \underline A_x(y,t)$ is continuous, strictly increasing (and thus open) on $[0,1]$ 
and maps $[0,1]$ onto $[\underline A_x(0,t),1]$ for all $t\in [0,T]$.
Finally, the map $(x,y,t) \mapsto \underline A_x(y,t)$ belongs to $L^{\infty}(\Omega; C([0,1]\times [0,T]; [0,1])$.
\end{lemma}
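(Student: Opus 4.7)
The proof splits naturally into four steps, the first three of which are essentially standard ODE arguments; the substantive work is hidden in the last, namely the measurability in $x$.

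First I would verify the qualitative properties of the velocity field $\hat v_x$. Using $(H_4)$ and $(H_5)$, for a.e.\ $x\in\Omega$ the map $(a,t)\mapsto\hat v_x(a,t)$ is continuous on $[0,1]\times[0,T]$ and Lipschitz in $a$ uniformly in $t$, with
\[
	\partial_a\hat v_x(a,t)=\int\partial_a\cG_x(a,\hat A_x(\xi,t))\,dg_{x,t}(\xi)+\partial_a\cS(x,a,u_1,\dots,u_{N-1})\le 0
\]
as in Remark~\ref{remark def}(b); boundedness of $u$ on the compact set $\overline Q_T$ ensures the constant in $(H_5)$ is uniform. Crucially, $\cG_x(1,b)=0$ and $\cS(x,1,u_1,\dots,u_{N-1})=0$ give $\hat v_x(1,t)=0$, and combined with $\partial_a\hat v_x\le 0$ this yields $\hat v_x\ge 0$ on $[0,1]\times[0,T]$.

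Second, for each such $x$ and each $y\in[0,1]$, Picard--Lindel\"of applied to \eqref{cauchy A} produces a unique local-in-time solution. The constant $a\equiv 1$ is a solution passing through $y=1$ (since $\hat v_x(1,t)=0$), so by uniqueness $\underline A_x(1,t)\equiv 1$ and no other characteristic can reach the level $a=1$ in finite time. Since $\hat v_x(0,t)\ge 0$, the solution with $y=0$ is nondecreasing and bounds all others from below (by the ordering for scalar autonomous-in-$a$ ODEs); monotonicity of the flow $y\mapsto\underline A_x(y,t)$ follows from uniqueness, which prevents characteristics from crossing. Consequently every $\underline A_x(y,\cdot)$ remains in $[0,1]$ and extends to all of $[0,T]$. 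The map $y\mapsto\underline A_x(y,t)$ is therefore continuous, strictly increasing, and its image is exactly $[\underline A_x(0,t),1]$.

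Third, continuity in $(y,t)$ on $[0,1]\times[0,T]$ comes from the standard Gronwall-type estimate: two characteristics satisfy $|\underline A_x(y_1,t)-\underline A_x(y_2,t)|\le|y_1-y_2|e^{Lt}$ where $L$ is the Lipschitz constant of $\hat v_x$ in $a$, while continuity in $t$ is built into the integral formulation.

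The fourth step, which I expect to be the main technical obstacle, is the joint $L^\infty(\Omega;C([0,1]\times[0,T];[0,1]))$ regularity. Boundedness in $[0,1]$ is already established. For strong measurability of $x\mapsto\underline A_x$ valued in the separable Banach space $C([0,1]\times[0,T])$, I would invoke Pettis' theorem: it suffices to verify weak (pointwise) measurability, which follows by writing $\underline A_x$ as the uniform-in-$(y,t)$ limit of the Picard iterates
\[
	\underline A_x^{(n+1)}(y,t):=y+\int_0^t\hat v_x\bigl(\underline A_x^{(n)}(y,s),s\bigr)\,ds,\qquad \underline A_x^{(0)}(y,t)\equiv y,
\]
and checking inductively that each iterate is measurable in $x$. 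Measurability at stage $n+1$ reduces to measurability of $x\mapsto\hat v_x(a,t)$ for fixed $(a,t)$, which in turn follows from the weak$^*$ measurability of $g$ (applied to the continuous integrand $\xi\mapsto\cG_x(a,\hat A_x(\xi,t))$), together with the essential measurability of $\hat A_x$ in $x$ and continuity of $\cG_x$ and $\cS$. The uniform Lipschitz control on $\hat v_x$ yields uniform convergence of the iterates on $[0,1]\times[0,T]$ for a.e.\ $x$, so measurability passes to the limit.
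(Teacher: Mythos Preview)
Your proposal is correct and follows essentially the same strategy as the paper: establish that $\hat v_x$ is continuous in $(a,t)$ and uniformly Lipschitz in $a$, then invoke standard ODE theory. A couple of minor differences are worth noting. The paper spends most of its proof on the continuity of $(a,t)\mapsto\int\cG_x(a,\hat A_x(\xi,t))\,dg_{x,t}(\xi)$, splitting the difference into two terms and using uniform continuity of $\cG_x(\cdot,\hat A_x(\cdot,\cdot))$ together with narrow continuity of $t\mapsto g_{x,t}$; you compress this into a single sentence. Conversely, the paper does not derive monotonicity of $y\mapsto\underline A_x(y,t)$ from uniqueness of trajectories as you do, but from the explicit variational formula
\[
\partial_y\underline A_x(y,t)=\exp\!\left(\int_0^t\partial_a\hat v_x(\underline A_x(y,s),s)\,ds\right),
\]
which also yields two-sided uniform bounds $0<C_1\le\partial_y\underline A_x\le C_2$. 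These quantitative bounds are recorded in Remark~\ref{stime A} and used later in the contraction argument, so even though your qualitative argument suffices for the lemma as stated, the exponential formula is worth retaining. Finally, your Step~4 on measurability in $x$ via Picard iteration and Pettis' theorem is considerably more careful than the paper, which simply asserts the $L^\infty(\Omega;C([0,1]\times[0,T];[0,1]))$ membership without further comment.
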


\begin{proof} We claim that, for a.e.~$x\in \Omega$, the map
$(a,t)\mapsto \hat v_x(a,t)$ is continuous and
Lipschitz continuous with respect to  $a\in [0,1]$, uniformly in $t\in [0,T]$.

By \eqref{Sa global} this is trivial for the map
$(a,t)\mapsto  \cS(x,a,u_1(x,t),\dots,u_{N-1}(x,t))$,
since $(x,t)\mapsto (u_1,\dots,u_{N-1})$ is continuous on $\overline \Omega\times [0,T]$
and $(u_1,\dots,u_{N-1})$ belongs to a compact set of $\mathbb R^{N-1}$. 
It remains to show that  
$(a,t)\mapsto   \displaystyle {\int}\cG_x(a,\hat A_x(\xi,t))\,dg_{x,t}(\xi)$
is continuous and uniformly Lipschitz continuous with respect to $a\in [0,1]$. 

Let $a_0,a \in [0,1]$ and $ t_0,t\in(0,T]$ be given. Then 
\begin{equation*}
\begin{split}
&\left| \displaystyle {\int}\cG_x(a,\hat A_x(\xi,t))\,dg_{x,t}(\xi,t)-\displaystyle {\int}\cG_x(a_0,\hat A_x(\xi,t_0))
\,dg_{x,t_0}(\xi)\right|
\\&\quad 
\le
\left| \displaystyle {\int}\cG_x(a,\hat A_x(\xi,t))\,dg_{x,t}(\xi)- 
\displaystyle {\int}\cG_x(a_0 ,\hat A_x(\xi,t_0))\,dg_{x,t}(\xi,t)\right|
\\&\qquad 
+\left| \displaystyle {\int}\cG_x(a_0 ,\hat A_x(\xi,t_0))\,dg_{x,t}(\xi)
- \displaystyle {\int}\cG_x(a_0,\hat A_x(\xi,t_0))\,dg_{x,t_0}(\xi)\right|
:= I_1+I_2.
\end{split}
\end{equation*}
Since 
$(a,\xi,t)\mapsto\cG_x(a,\hat A_x(\xi,t))$ is uniformly continuous in $[0,1]^2\times [0,T]$, 
$I_1\to 0$ as $(a,t)\to (a_0,t_0)$.
Since $\xi\mapsto\cG_x(a_0,\hat A_x(\xi,t_0))$
is continuous in $[0,1]$) and $t\mapsto g_{x,t}$ is narrowly continuous
(see Proposition \ref{wasserstein}), also $I_2\to 0$ as $t\to t_0$.

Similarly, by \eqref{Ga global}, for a.e.~$x\in\Omega$ and all $\xi\in [0,1]$ and $t\in [0,T]$,
$$ \left|\cG_x(a,\hat A_x(\xi,t)) - \cG_x(a',\hat A_x(\xi,t))\right|\le C|a-a'| \quad\text{for } a,a'\in [0,1]. $$

This completes the proof of the claim, which implies, for a.e.~$x\in \Omega$, the existence and uniqueness of the solution problem \eqref{cauchy A} for all $y\in [0,1]$. By a standard argument,
\begin{equation}\label{dAdy}
\partial_y \underline A_x(y,t)= \exp\left[ \int_0^t \partial_a\hat v_x(\underline A_x(y,s),s) ds\right]>0,
\end{equation}
so $0< C_1\le \partial_y \underline A_x(y,t)\le C_2$ for some constants $C_1$ and $C_2$ which depend 
on the compact set $K\subset \mathbb R^{N-1}$ which contains $(u_1(x,t),\dots,u_{N-1}(x,t))$.
\end{proof}

\begin{remark}\label{stime A} 
It follows from the proof of Lemma \ref{char febr 7} (in particular from \eqref{dAdy}) 
that $\underline A_x(\xi,s)$ is Lipschitz
continuous in $\xi$, uniformly with respect to $x$ and $s$.
\end{remark}

\begin{lemma} \label{g} Let $(\hat A,g,u) \in \cX_T$. Let, for a.e.~$x\in \Omega$, $\underline A$ be defined as in 
Lemma \ref{char febr 7}  and $(F[g])_{x,t}$ be the signed measure on $[0,1]$ defined by
\begin{equation*}
d(F[g])_{x,t}\!= \! \eta(t)\chi(x,t) \! \left[ \partial_y 
\underline{A}_x(y,t)\displaystyle{\int}  \!\! P(t, \underline A_x(\xi,t), \underline{A}_x(y,t)) dg_{x,t}(\xi)\,dy-dg_{x,t}(y)\right]
\end{equation*}
for $0<t\le T$.
Then, for a.e.~$x\in \Omega$,

\noindent $(i)$ the integral equation
\begin{equation}\label{int eq}
\underline g_{x,t}  =   (f_0)_x + \int_0^t (F[\underline g])_{x,s}\, ds
\end{equation}
has a unique solution $t\mapsto \underline{g}_{x,t}$ which belongs to $C([0,T], X_{[0,1]})$;\footnote{If $t\to\mu(t) $ is a continuous map from $[0,T]$ to $X_{[0,1]}$, for any Borel set $\mathcal B\subset[0,1]$, we set
$$ \Big(\int_0^t \mu(s)\, ds \Big)(\mathcal B) := \int_0^t  \mu(s) (\mathcal B) \, ds. $$}
\smallskip

\noindent $(ii)$ the measure $\underline{g}_{x,t}$ is a weak solution of the system
$$
\begin{cases}
\medskip
\partial_t \underline {g}_{x,t}(y)=\eta\chi \! \left[\partial_y 
\underline{A}_x(y,t)\displaystyle{\int}  \! P(t , \underline A_x(\xi,t), \underline{A}_x(y,t)) d\underline{g}_{x,t}(\xi)- \underline{g}_{x,t}(y)\right]\\
\underline{g}_{x,0}=(f_0)_x
\end{cases}
$$
in the sense of \eqref{weak sol measures}.
\end{lemma}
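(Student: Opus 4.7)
The plan is a fixed-point argument in the complete metric space $C([0,T]; X_{[0,1]})$. Working pointwise in $x$ (measurability in $x$ of the resulting $\underline g$ is postponed), introduce the ``gain'' measure
$$
dN_{h,s}(y) := \partial_y \underline A_x(y,s)\left(\int P(s,\underline A_x(\xi,s), \underline A_x(y,s))\,dh(\xi)\right)dy,
$$
so that $dF[h]_{x,s} = \eta(s)\chi(x,s)(dN_{h(s),s} - dh(s))$. The change of variables $a = \underline A_x(y,s)$ together with hypothesis $(H_6)$ (normalization and support of $P$) shows that $\int dN_{h(s),s} = 1$. To handle both nonnegativity and mass preservation simultaneously, I would absorb the linear decay through the integrating factor $G(x,t):=\int_0^t \eta(s)\chi(x,s)\,ds$ and rewrite \eqref{int eq} as the equivalent fixed-point problem for the operator
$$
\Psi(h)_{x,t} := e^{-G(x,t)}(f_0)_x + \int_0^t e^{-(G(x,t)-G(x,s))}\eta(s)\chi(x,s)\,N_{h(s),s}\,ds.
$$

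I would then check that $\Psi$ maps $C([0,T]; X_{[0,1]})$ into itself. Nonnegativity is manifest because both terms involve only positive factors and positive measures. Total mass equals one by combining $\int dN_{h(s),s} = 1$ with the identity $\partial_s\bigl[e^{-(G(x,t)-G(x,s))}\bigr] = \eta(s)\chi(x,s)\,e^{-(G(x,t)-G(x,s))}$, which yields $e^{-G(x,t)} + \int_0^t e^{-(G(x,t)-G(x,s))}\eta\chi\,ds = 1$. Continuity in $t$ with respect to $\cW_1$ follows from narrow continuity (Proposition~\ref{wasserstein}) and dominated convergence.

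The crucial step is a Lipschitz bound $\cW_1(N_{h_1(s),s}, N_{h_2(s),s}) \le C\,\cW_1(h_1(s), h_2(s))$ with $C$ independent of $s$. By Kantorovich duality it suffices to estimate $\int\phi\,d(N_{h_1,s}-N_{h_2,s})$ for any $1$-Lipschitz $\phi$; replacing $\phi$ by $\phi - \phi(0)$ (which does not change the integral since the $N_{h_i,s}$ are probability measures) gives $\|\phi\|_\infty \le 1$. The change of variables $a = \underline A_x(y,s)$ rewrites
$$
\int\phi(y)\,dN_{h,s}(y) = \int \Phi_s(\xi)\,dh(\xi), \quad \Phi_s(\xi) := \int_{\underline A_x(0,s)}^1 \phi\bigl(\underline A_x^{-1}(a,s)\bigr)\,P(s,\underline A_x(\xi,s),a)\,da,
$$
and then \eqref{P lip} combined with the uniform bound on $\partial_\xi \underline A_x$ from Remark~\ref{stime A} shows $\Phi_s$ is Lipschitz with a constant independent of $s$ and $h$; this proves the estimate. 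Inserting it into the Duhamel formula,
$$
\cW_1(\Psi(h_1)_{x,t}, \Psi(h_2)_{x,t}) \le C\|\eta\chi\|_\infty \int_0^t \cW_1(h_1(s), h_2(s))\,ds.
$$
Equipping $C([0,T]; X_{[0,1]})$ with the Bielecki-type distance $d_\lambda(h_1,h_2) := \sup_{t\in[0,T]} e^{-\lambda t}\cW_1(h_1(t),h_2(t))$ for $\lambda$ sufficiently large turns $\Psi$ into a strict contraction, and Banach's fixed point theorem yields the unique solution $\underline g_{x,\cdot}$ of $(i)$.

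For $(ii)$, given $\phi \in C([0,1]\times[0,T])$ with $\partial_t\phi$ continuous, I would integrate the fixed-point identity against $\partial_t\phi$ in $t$, use Fubini to exchange the order of integration, and perform a scalar integration by parts in $t$ to transfer the derivative back to $\phi$; substituting the explicit form of $F[\underline g]$ recovers exactly \eqref{weak sol measures}. The main obstacle throughout is the uniform Lipschitz estimate on $h \mapsto N_{h,s}$, which requires combining hypothesis \eqref{P lip}, the invertibility and Lipschitz bounds on $\underline A_x(\cdot,s)$ from Lemma~\ref{char febr 7} and Remark~\ref{stime A}, and the reduction to $\phi(0)=0$; once this is in hand, the remaining contraction and weak-formulation steps follow standard patterns.
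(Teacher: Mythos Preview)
Your argument is correct and follows essentially the same strategy as the paper: an integrating-factor transformation to handle the loss term, combined with a Kantorovich-duality Lipschitz estimate for the gain operator (including the same reduction to $\phi(0)=0$ via mass conservation). The differences are purely in packaging. The paper substitutes $q_t:=e^{G(t)}g_{x,t}$ and works in an auxiliary space $Y$ of rescaled measures, proving a short-time contraction and then continuing; you instead use the Duhamel formula so that $\Psi$ acts directly on $C([0,T];X_{[0,1]})$, and you obtain a global contraction in one shot via a Bielecki weight. Your route is arguably a bit cleaner, since nonnegativity and the probability-measure property of $\Psi(h)_t$ are visible by inspection and no auxiliary space or continuation step is needed. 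For part~$(ii)$ the paper again passes through $q$ and substitutes $\psi=e^{-G}\phi$ into a weak formulation, whereas your direct Fubini / integration-by-parts argument on the integral identity is the standard equivalent manoeuvre; just take care to justify the product-rule step (e.g.\ via $H(t,s)=\int\phi(\cdot,t)\,d\underline g_{x,s}$ and absolute continuity of $s\mapsto\underline g_{x,s}(B)$).
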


\begin{proof} 
First of all, we observe that for a.e.~$x\in\Omega$ and $s\in [0,T]$ and for all $ g\in X_{[0,1]}$,
\begin{equation}\label{dF=0}
\int d(F[g])_{x,s}=0.
\end{equation}
The assertion is obvious if $\chi(x,s)=0$. If $\chi(x,s)=1$, by Tonelli's theorem,
\begin{equation*}\begin{split}
\frac1\eta  \int d(F[g])_{x,s}& =\!
\int \!  \left( \displaystyle{\int}  \!\! P(s,\underline A_x(\xi,s), \underline{A}_x(y,s))  \partial_y \underline{A}_x(y,s))
dy         \right)  dg_{x,s}(\xi)
 - \int dg_{x,s}(y)
 \\&= \int   \left( \displaystyle{\int}  \!\! P(s,\underline A_x(\xi,s), b)  
db         \right) \, dg_{x,s}(\xi)  - \int dg_{x,s}(y)=0.
\end{split}\end{equation*}

We set, for  a.e.~$x\in \Omega$ (from now on we fix such $x$),
$$ q_t:=e^{\int_0^t\eta(s) \chi(x,s)ds}  g_{x,t} \quad\text{for } t\in[0,T]. $$
Let $Y$ be the set of such $q$, i.e. $q\in Y$ if  
the map $t\mapsto e^{-\int_0^t\eta(s) \chi(x,s)ds} q_t$ belongs to $C([0,T], X_{[0,1]})$.
Then $Y$ naturally inherits a metric from $C([0,T], X_{[0,1]})$,
$$
d_Y(q_1,q_2):= \sup_{t\in(0,T)} \mathcal W_1\left(e^{-\int_0^t\eta(s) \chi(x,s)ds} (q_1)_t,e^{-\int_0^t\eta(s) \chi(x,s)ds} 
(q_2)_t\right),
$$
so $Y$ is a complete metric space.  

The equation for $ g$ translates into
$$
\partial_t q_t(y)=Lq_t(y):=\!\eta\chi(x,t) \partial_y \underline{A}_x(y,t)\int P(t, \underline A_x(\xi,t), \underline{A}_x(y,t)) 
dq_t(\xi)\ge 0,
$$
and the corresponding integral equation is
\begin{equation}\label{q integral}
q_t  =   (f_0)_x + \int_0^t Lq_s\, ds\quad  \text{for }
t\in [0,T].
\end{equation}

We consider the map
\begin{equation}\label{oct 8 eq:2}
 q \mapsto (f_0)_x + \int_0^t Lq_s\, ds.
\end{equation}
One easily checks that, by \eqref{dF=0}, for all $q\in Y$
\begin{equation}\label{dL=0}
\int dLq_t=0\quad \text{for }t\in [0,T], \qquad\quad (f_0)_x + \int_0^t Lq_s\, ds\in Y.
\end{equation}
If we show that for all $q_1,q_2\in Y$
\begin{equation}\label{d_Y}
J_0(q_1,q_2)\!:=d_Y\!\left(\! (f_0)_x + \int_0^t L(q_{1})_s,(f_0)_x + \int_0^t L(q_{2})_s\right)\!\le Cd_Y(q_1,q_2),
\end{equation}
it follows from a standard contraction argument that the map \eqref{oct 8 eq:2} has a unique fixed point in a sufficiently small interval 
$[0,\tau]$ and  that equation  \eqref{q integral} has a unique local solution $\underline q$ which can be continued in $[0,T]$.

To prove \eqref{d_Y} we use the characterisation of the $\cW_1$-distance given in 
Proposition \ref{W1}: 
\begin{equation}\label{d_Y bis}
d_Y(q_1,q_2)= \sup_{t\in(0,T)}\!\left[e^{-\int_0^t\eta(s) \chi(x,s)ds}\sup\left\{ \int \phi\, d(q_1-q_2)_t\; ; \; \phi \in \mathrm{Lip}_1([0,1],\mathbb R)\right\}\right].
\end{equation}
Hence  
$$
J_0(q_1,q_2)=\sup_{t\in(0,T)}\left[e^{-\int_0^t\eta(s) \chi(x,s)ds}\sup\left\{ I_\phi(t)\; ; \; \phi \in \mathrm{Lip}_1([0,1],\mathbb R)\right\}\right],
$$
where 
$$
I_\phi(t):=\int \phi\, d\int_0^t (L(q_1)_s-L(q_2)_s)\, ds
$$
and $L(q_1)_s-L(q_2)_s$ is given by
$$ 
\eta(s)\chi(x,s) \! \left( \displaystyle{\int}  \!\! P(s, \underline A_x(\xi,s), \underline{A}_x(y,s))\partial_y 
\underline{A}_x(y,s)d(q_1-q_2)_s(\xi)\!\right)\!dy.
$$
By Tonelli's Theorem,  $I_\phi(t)$ is equal to
$$
\begin{aligned}
&\int \!\!\phi(y) \!\!\int_0^t \!\!\left(
\!\!\eta\chi \! \left( \int  \!\! P(s, \underline A_x(\xi,s), \underline{A}_x(y,s))\partial_y 
\underline{A}_x(y,s) d(q_1-q_2)_s(\xi)\!\right)\!dy\!\!
\right)\! ds\\
&=\int_0^t\!\!\eta\chi  \!\! \left(\! \int_{\underline A_x(0,s)}^1 \!\!\phi(B_x(b,s)) 
 \!\! \left(\int  \!\! P(s, \underline A_x(\xi,s), b)
d(q_1-q_2)_s(\xi)\!\right)\!db
\right)\! ds\\
&=\int_0^t\!\!\eta\chi  \!\!\left( \!\int\!  \left(\int_{\underline A_x(0,s)}^1\! \! \!  \phi(B_x(b,s)) 
P(s, \underline A_x(\xi,s), b)db\! \right)\! 
d(q_1-q_2)_s(\xi)
\!\right)\! ds.\\
\end{aligned}
$$
By \eqref{dL=0}, $I_\phi(t)=0$ if $\phi$ is constant, so we may assume that $\phi(0)=0$.
Hence $|\phi|\le 1$ and, by \eqref{P lip},
$$
\begin{aligned}
&\left|\int_{\underline A_x(0,s)}^1 \phi(B_x(b,s)) \left( P(s, \underline A_x(\xi',s), b)-P(s, \underline A_x(\xi'',s), b)\right)db\right|\\
&\qquad\le \int_{\underline A(x,0,s)}^1 \left|P(s, \underline A_x(\xi',s), b)-P(s, \underline A_x(\xi'',s),b)\right|db
\le L |\xi'-\xi''|.
\end{aligned}
$$
Now \eqref{d_Y} follows from \eqref{d_Y bis}:  
$$
J_0(q_1,q_2)\le TL\max_{t\in [0,T]} \eta(t)\,d_Y(q_1,q_2).
$$

Setting 
$$
\underline g_{x,t}=e^{-\int_0^t\eta(s) \chi(x,s)ds}  \underline  q_t \quad\text{for } 
t\in[0,T],
$$
we have  completed the proof of part $(i)$ of the lemma.

Fix an $x\in\Omega$ for which \eqref{int eq} and \eqref{q integral} (for $\underline q$) 
are valid. 
Since $P$ and $\underline A_x$ are continuous functions and the map $t\mapsto\underline  g_{x,t}$ is 
continuous in the weak$^*$ topology (and so is $t\mapsto\underline  q_t$), the map
$$
(y,t) \mapsto \int P(t, \underline A_x(\xi,t), \underline{A}_x(y,t)) d \underline  q_t(\xi)
$$
is continuous in $[0,1]\times [0,T]$. Hence $L(\underline q,\cdot)\in L^\infty((0,1)\times (0,T))$.

We set $\tilde q=\underline q-(f_0)_x$. By \eqref{q integral}
$$
\tilde q_t  = \int_0^t L(\tilde q_s+(f_0)_x)\, ds\quad  \text{for } 
t\in [0,T].
$$
Since, by the boundedness of $L(\tilde q_s+(f_0)_x)(y)$, $t\mapsto \tilde q_t(y)$ is absolutely continuous in $[0,T]$ for a.e.~$y\in (0,1)$, 
this means  that
\begin{equation}\label{weak solution bis}
\int_0^1\!\psi(y,\tau)\tilde q_\tau(y)dy
=\!\iint_{(0,1)\times (0,\tau)}\!\!\left[ \psi_t(y,t) \tilde q_t  (y) \!+\! \psi(y,t)L((\tilde q_t\!-\!(f_0)_x)(y))\right]dydt
\end{equation}
for all $\tau\in (0,T]$ and $\psi \in L^\infty([0,1]\times [0,T])$ with $\psi_t\in L^\infty([0,1]\times [0,T])$. 

Finally let $\phi(y,t) $ be as in the first part of the proof (we recall that $x$ is fixed).
We substitute the function 
$\psi(y,t)=e^{-\int_0^t\eta(s) \chi(x,s)ds}\phi(y,t)$ into \eqref{weak solution bis}.
Since  
$$
\partial_t \psi(y,t)=e^{-\int_0^t\eta(s)\chi(x,s)ds} (-\eta \chi\phi(y,t)+\partial_t\phi(x,y,t),
$$
$\psi$ and $\partial_t\psi$ are  continuous with respect to $y$ and, by a straightforward calculation, \eqref{weak solution bis} 
transforms into
\begin{equation}\label{weak solution tris}
\begin{aligned}
&\int\phi(y,\tau)d\underline g_{x,\tau}(y)- \int\! \psi(y,0)d(f_0)_x(y)
=-\!\int_0^\tau \!\!\left[ \int\partial_t\psi(y,t)d(f_0)_x(y)\right]dt
\\&\qquad\qquad  
+\int_0^\tau \left( \int \phi(y,t)d(F[\underline g])_{x,t}(y)+\int\partial_t\phi(y,t))d\underline g_{x,t}(y)\right)dt\\
\end{aligned}
\end{equation}
for all $\tau\in (0,T]$. Since $\psi(y,0)=\phi(y,0)$, this implies that 
$\underline g_{x,t}(y)$ satisfies the equation of the system in the sense of \eqref{weak sol measures}: 
\begin{equation*}
\int\!\!\phi(y,\tau)d\underline g_{x,\tau}(y)- \int\!\! \phi(y,0)d(f_0)_x(y)
=\int_0^\tau \!\left( \int\! \phi(y,t)d(F[\underline g])_{x,t}(y)+\int \!\partial_t \phi(y,t))d\underline g_{x,t}(y)\right)dt.
\end{equation*}
\end{proof}

Let $(\hat A,g,u) := (\hat A,g,u_1.\dots, u_N) \in \cX_{\tau,\rho}$.
By Lemma \ref{char febr 7}, $(\hat A,g,u)$ uniquely defines a function $\underline A
\in L^{\infty}(\Omega; C([0,1]\times [0,\tau]; [0,1])$, and, by Lemma \ref{g},
 $ \underline A$ uniquely defines a measure  
$\underline g \in \mathcal L(\Omega;  C([0,T]; X_{[0,1]}))$.
Let $\underline u:= (\underline u_1,\dots, \underline u_N)$ be the weak solution of the
problem
\begin{equation}\label{system fixed point}
\begin{cases}
\eps\partial_t \underline  u_m - d_m\Delta  \underline  u_m =F_m(\underline A, \underline  g, u)&(1\leq m<N) \\
\smallskip
\eps\partial_t \underline  u_N=F_N(\underline A, \underline g, u),\\
\end{cases}
\qquad\text{in }Q_\tau=\Omega\times (0,\tau]
\end{equation}
with initial-boundary conditions
\begin{equation}\label{IC+BC fixed point}
	\begin{cases}
		\smallskip \underline u_i(x,0)=u_{0i}(x) &\text{if }x\in \Omega\\
		\smallskip \partial_n \underline u_i(x,t)=0&\text{if }x\in \partial \Omega_0, \ t>0\\
		\partial_n \underline u_i(x,t)=-\gamma_i \underline u_i(x,t)&\text{if }x\in \partial \Omega_1\times (0,\tau]
	\end{cases}
	\qquad (1\le i\le N).
\end{equation}
Here we have set 
\begin{equation*}
\begin{cases}
F_1(\underline A, \underline g, u)\!:=\!-\! \sigma_1u_1 \!- \!u_1\!\sum\limits_{j=1}^{N}a_{1,j}u_j
\!+\!C_{\mathcal F}\!\displaystyle {\int_0^1}(\mu_0\!+\! \underline A_x(\xi,t))(1\!- \!\underline A_x(\xi,t)) d\underline g_{x,t}(\xi) \\
F_m(\underline A, \underline  g, u)\!:=\!-\sigma_m u_m+\frac{1}{2}\sum\limits_{j=1}^{m-1}a_{j,m-j}u_ju_{m-j} 
-u_m\sum\limits_{j=1}^{N}a_{m,j}u_j   \\
F_N(\underline A, \underline g, u)\!:=\!\tfrac{1}{2}\sum\limits_{\substack{j+k\geq N \\ k,\,j<N}}a_{j,k}u_ju_{k}.\\
\end{cases}
\end{equation*}
Observe that $F_i\in L^\infty(\Omega\times [0,\tau])$ $(i=1,\dots,N)$ and its norm
only depends on the compact set $K\subset \R^n$ containing $(u_1,\cdots,u_N)$.
We also observe that system \eqref{system fixed point}-\eqref{IC+BC fixed point} consists of $N-1$ (uncoupled) scalar linear heat 
equations with linear boundary conditions and an ordinary differential equation.
Therefore  it has a unique weak solution $\underline u$.
More precisely, following \cite{nittka} and recalling that $\mathcal X_{\tau,\rho}$ denotes the closed ball of radius $\rho>0$ centered at $(y,f_0,u_0)$ in $\mathcal X_\tau$, we have that

\begin{proposition}[\cite{nittka}, Theorems 2.11, 3.2 and 3.3]\label{nittka} Let $(\hat A,\underline  g,u)\in X_{\tau,\rho}$. 
For all $1\le i< N$ there exists a unique $\underline u_i\in C([0,\tau]; L^2(\Omega))\cap L^2([0,\tau]; H^1(\Omega))$
such that
\begin{equation*}
\begin{split}
&\int_0^\tau 
\left[ \int_\Omega \nabla \underline u_i(x,s) \cdot \nabla \psi(x,s) \, dx 
+  \gamma_i\int_{\partial\Omega_1}
\underline u_i(x,s)\psi (x,s)\, d\sigma(x) \right]\, ds
\\&\qquad
=\iint_{Q_\tau} \underline u_i\partial_t \psi
+ \int_\Omega u_{0i}\psi(x,0)\, dx + \iint_{Q_\tau} F_i(\underline A,\underline   g, u)\psi
\end{split}
\end{equation*}
for all $\psi\in H^1([0,\tau]; H^1(\Omega))$, $\psi(x,\tau)=0$.
Let $\underline u_N(x,t) = u_{0N}(x)+\displaystyle{ \int_0^\tau} F_N(\underline A, \underline  g, u)\, ds$
and $\underline u=(\underline u_1,\cdots, \underline u_N)$.
Then $\underline u\in C(\overline Q_\tau; \mathbb R^N)$, $\underline u(\cdot,0)=\underline u_0$ and, for 
$1\le i\le N$, 
\begin{equation*}\begin{split}
\|\underline u_i\|_{C(\overline Q_\tau; \mathbb R)}
\le C \big\{
 \|u_{0i}\|_{L^\infty( \Omega)} + \|F_i\|_{L^r(Q_\tau; \mathbb R)}
\big\}\qquad \text{if }r> n, \quad \tfrac1r+\tfrac{n}{2r}<1.
\end{split}\end{equation*}
In particular
$
\|\underline u_i\|_{C(\overline  Q_\tau; \mathbb R)}
\le C \big\{
 \|u_{0i}\|_{C(\overline  \Omega)} + \|F_i\|_{C(\overline Q_\tau; \mathbb R)}
\big\}$.
\end{proposition}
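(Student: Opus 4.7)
The plan is to exploit that, once the input datum $(\hat A,\underline g,u)\in\cX_{\tau,\rho}$ is fixed, system \eqref{system fixed point}--\eqref{IC+BC fixed point} is not genuinely coupled: the source $F_i(\hat A,\underline g,u)$ is a fixed element of $L^\infty(Q_\tau)$ whose norm is controlled by $\rho$ and the data (using the continuity of $u$ on $\overline Q_\tau$, the continuity of $\hat A$ on $\overline\Omega\times[0,1]\times[0,\tau]$, and the narrow continuity of $t\mapsto \underline g_{x,t}$ together with the boundedness of $\mu_0+\underline A_x(\xi,t)$). Hence I would treat the equation for $\underline u_N$ (a pointwise-in-$x$ ODE) and the $N-1$ decoupled linear parabolic problems for $\underline u_1,\dots,\underline u_{N-1}$ separately. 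For $\underline u_N$ the explicit formula $\underline u_N(x,t)=u_{0N}(x)+\varepsilon^{-1}\int_0^t F_N(\hat A,\underline g,u)(x,s)\,ds$ is the unique solution, and the continuity of $F_N$ on $\overline Q_\tau$ immediately yields $\underline u_N\in C(\overline Q_\tau)$ together with the stated estimate.

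For each $1\le i<N$ I would recast the problem variationally on $V=H^1(\Omega)$ with the continuous, coercive bilinear form
\begin{equation*}
a_i(w,\psi):=d_i\int_\Omega\nabla w\cdot\nabla\psi\,dx+d_i\gamma_i\int_{\partial\Omega_1}w\psi\,d\sigma(x),
\end{equation*}
coercivity following (modulo a standard lower-order shift) from a Poincar\'e--trace inequality exploiting that $\gamma_i>0$ on $\partial\Omega_1$. The abstract Lions--Galerkin framework for linear parabolic equations then yields a unique $\underline u_i\in L^2([0,\tau];H^1(\Omega))\cap C([0,\tau];L^2(\Omega))$ satisfying the weak form with $u_i(\cdot,0)=u_{0i}$ in $L^2(\Omega)$. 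The nonnegativity of $\underline u_i$ (when the data are nonnegative) would come separately from testing against $(\underline u_i)_-$, but the proposition deliberately drops this; only boundedness and continuity are claimed here.

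The nontrivial part is the $C(\overline Q_\tau)$-bound and continuity up to the (mixed) parabolic boundary. For the $L^\infty$-bound I would use a Moser/Stampacchia truncation argument: testing against $(\underline u_i-k)_+^{2p-1}$, using Sobolev embedding in space-time, and iterating yields the estimate with the Aronson--Serrin exponent $r>n$, $\frac1r+\frac{n}{2r}<1$, which is exactly the borderline guaranteeing boundedness of weak solutions with $F_i\in L^r(Q_\tau)$. Continuity then follows from De Giorgi--Nash--Moser H\"older regularity in the interior and up to the boundary, combined with $u_{0i}\in C(\overline\Omega)$ and compatibility at $t=0$. The principal obstacle is handling the mixed Neumann/Robin conditions at the interface $\overline{\partial\Omega_0}\cap\overline{\partial\Omega_1}$: standard parabolic theory requires either a reflection/localization argument or a direct treatment of the Robin term in the iteration (controlling the boundary integral via the trace theorem and absorbing it into the $H^1$-norm). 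This is precisely the setting addressed in Nittka's work, so in practice the cited theorems apply verbatim; a self-contained proof would follow Ladyzhenskaya--Solonnikov--Ural'tseva combined with Nittka's estimates at the mixed-boundary interface.
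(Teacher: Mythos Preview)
The paper does not prove this proposition at all: it is stated with an explicit citation to Nittka's Theorems 2.11, 3.2 and 3.3 and then used as a black box. Your sketch is therefore strictly more detailed than what the paper provides, and the overall strategy you describe (Lions--Galerkin existence in $C([0,\tau];L^2)\cap L^2([0,\tau];H^1)$, Stampacchia/Moser iteration under the Aronson--Serrin condition $r>n$, $\tfrac1r+\tfrac{n}{2r}<1$, then De Giorgi--Nash--Moser continuity up to the boundary) is indeed the content of the cited theorems.

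One correction: you flag the ``mixed-boundary interface $\overline{\partial\Omega_0}\cap\overline{\partial\Omega_1}$'' as the principal obstacle, but in this paper $\partial\Omega$ is assumed to be the \emph{disjoint} union of the smooth manifolds $\partial\Omega_0$ and $\partial\Omega_1$ (see the hypotheses in Section~\ref{Section2}). There is no interface, and the Robin and Neumann conditions live on separate smooth components; the boundary regularity is then a straightforward localization to each component rather than a genuinely mixed problem. Apart from this, your outline is correct and consistent with what Nittka proves.
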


Now we are ready to define the map to which we shall apply a contraction argument.
Let $\rho >0$ be fixed.
Using the notation $\underline A$ (Lemma \ref{char febr 7}), $\underline g$ (Lemma \ref{g}) and $\underline u$ (Proposition \ref{nittka})
introduced above, we set
\begin{equation}\label{the map H}
\cH(\hat A,g,u) := (\underline A,\underline g,\underline u)\quad\text{for }(\hat A,g,u)\in \cX_{\tau,\rho}.
\end{equation}
Let $\cT_d$ denote the metric topology of $X_{\tau,\rho}$ and $\cT$ the weaker topology on $\cX_{\tau,\rho}$ 
which is obtained by endowing 
$L^{\infty}(\Omega; C([0,1]\times [0,\tau]; [0,1]))$ with the $L^1$-topology on $\Omega\times [0,1]\times [0,\tau]$.

\begin{proposition}\label{contr 1} Let $\rho >0$ be fixed and let $\cH(\hat A,g,u)$ be defined by \eqref{the map H}. 
If $\tau>0$ is sufficiently small,  then
$\cH: \cX_{\tau,\rho} \to \cX_{\tau,\rho}$, 
$(\underline A_n, \underline g_n, \underline u_n) \to (\underline A, \underline g, \underline u)$
in $\cT$ if $(\hat A_n, g_n, u_n) \to (\hat A, g, u)$ in $\cT_d$,
and $\cH$ is a contraction on $\cH(X_{\tau,\rho})$.
\end{proposition}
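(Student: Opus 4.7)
My plan is to verify in turn the three assertions: that $\cH$ maps $\cX_{\tau,\rho}$ into itself, that $\cH$ is $\cT_d$-to-$\cT$ continuous, and that $\cH$ restricted to the image is a contraction in the metric $d$; the last is the main point and where the difficulty sits.

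For the self-map property I would control each component against its initial value on $[0,\tau]$. For $\underline A$ the bound $|\partial_t \underline A|=|\hat v|\le M$ from \eqref{v first} and $(H_{4-5})$, combined with $\underline A_x(y,0)=y$, gives $\|\underline A - y\|_\infty\le M\tau$. For $\underline u$ the sources $F_i$ have an a priori bound in $L^\infty$ depending only on $\rho$, so Proposition~\ref{nittka} for $1\le i<N$ and the integral formula $\underline u_N = u_{0N}+\int_0^t F_N\, ds$ give $\|\underline u - u_0\|_{C(\overline Q_\tau)}\le C\tau$. For $\underline g$ the integral equation of Lemma~\ref{g}, the fact that $F[g]$ has total variation bounded by $2\eta\chi$ (each of its two terms is a probability measure times $\eta\chi$ by the normalisation \eqref{hyp on P}), and Kantorovich-Rubinstein duality yield $\cW_1(\underline g_{x,t},(f_0)_x)\le C\tau$. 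Choosing $\tau$ small makes each contribution $\le \rho$.

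Continuity from $\cT_d$ into $\cT$ follows from standard stability. If $(\hat A_n,g_n,u_n)\to(\hat A,g,u)$ in $\cT_d$, then $\hat v_n\to\hat v$ pointwise a.e.\ (the integrand in \eqref{v first} is continuous in $\hat A$ and $\cW_1$-continuous in $g$), so continuous dependence on parameters for the ODE \eqref{cauchy A} gives $\underline A_n\to\underline A$ pointwise and hence in $L^1$ by dominated convergence; the integral equation of Lemma~\ref{g} depends continuously on $\hat A$ and on $g$, giving $\underline g_n\to\underline g$ in $\cW_1$; and the Smoluchowski-heat system \eqref{system fixed point} depends continuously on its source, yielding $\underline u_n\to\underline u$ uniformly.

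The contraction step is the core of the proof. Let $(\hat A^{(j)},g^{(j)},u^{(j)})=\cH(\tilde A^{(j)},\tilde g^{(j)},\tilde u^{(j)})$ for $j=1,2$ lie in $\cH(\cX_{\tau,\rho})$, and denote by $D$ their $d$-distance. The crucial feature of elements of the image is that each $\hat A^{(j)}$ is an actual ODE trajectory, uniformly Lipschitz in $y$ by Remark~\ref{stime A}, and each $g^{(j)}$ satisfies the integral equation of Lemma~\ref{g}. I would run three Grönwall-type estimates. Writing $\partial_t(\underline A^{(1)}-\underline A^{(2)})=[\hat v^{(1)}(\underline A^{(1)})-\hat v^{(1)}(\underline A^{(2)})]+[\hat v^{(1)}(\underline A^{(2)})-\hat v^{(2)}(\underline A^{(2)})]$, the first bracket is Lipschitz in $a$ by \eqref{v_a} and the second is bounded by $C(\|\hat A^{(1)}-\hat A^{(2)}\|_\infty+\cW_1(g^{(1)},g^{(2)})+\|u^{(1)}-u^{(2)}\|_\infty)$ via $(H_{4-5})$ and Kantorovich-Rubinstein; Grönwall gives $\|\underline A^{(1)}-\underline A^{(2)}\|_\infty\le C\tau D$. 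Testing the difference of the integral equations for $\underline g^{(j)}$ against an arbitrary $\phi\in\mathrm{Lip}_1([0,1])$, the piece involving $\underline g^{(1)}-\underline g^{(2)}$ is absorbed by Grönwall, while the piece involving $\underline A^{(1)}-\underline A^{(2)}$ is controlled thanks to \eqref{P lip} and the $y$-Lipschitz regularity of $\underline A^{(j)}$ (this is the main obstacle: Lipschitz test functions force use of a $y$-Lipschitz bound on $\underline A^{(j)}$ which is available only because the inputs lie in the image, via Remark~\ref{stime A}), giving $\cW_1(\underline g^{(1)}_{x,t},\underline g^{(2)}_{x,t})\le C\tau D$. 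Finally, Proposition~\ref{nittka} applied to the difference of the Smoluchowski-heat systems yields $\|\underline u^{(1)}-\underline u^{(2)}\|_{C(\overline Q_\tau)}\le C\tau D$, whence $d(\cH(\hat A^{(1)},g^{(1)},u^{(1)}),\cH(\hat A^{(2)},g^{(2)},u^{(2)}))\le C\tau D$, a contraction for small $\tau$. This is precisely why the contraction is formulated on the image, and why the ad hoc variant of the fixed point theorem announced in the introduction is required to conclude existence.
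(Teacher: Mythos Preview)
Your proposal is correct and follows essentially the same strategy as the paper: smallness of each component for the self-map property, pointwise ODE stability plus dominated convergence for the $(\cT_d,\cT)$-continuity of the $\underline A$-component, and Gr\"onwall-type estimates on $\underline A$, $\underline g$, $\underline u$ for the contraction on the image, with the decisive observation that Kantorovich--Rubinstein duality requires the $\xi$-Lipschitz regularity of $\hat A^{(2)}$ (available only on $\cH(\cX_{\tau,\rho})$ via Remark~\ref{stime A}). Two minor points: the $\underline u$-smallness comes as $C\tau^{1/r}$ from the $L^r$-estimate in Proposition~\ref{nittka} rather than $C\tau$, and your treatment of the $\underline g$-contraction is in fact more explicit than the paper's, which simply refers back to the proof of Lemma~\ref{g}.
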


\begin{proof} First we prove that $\cH(\cX_{\tau,\rho})\subset \cX_{\tau,\rho}$ if $\tau$ is sufficiently small.

By Proposition \ref{nittka}, $\|u(\cdot,t) - u_0\|_{C(\overline \Omega; \mathbb R^N})\to 0$ as $t\to 0^+$, so 
it remains to show that,  as $t\to 0^+$, 
\begin{equation}\label{A,g; t to 0}
\sup_{x\in \Omega,\ 0\le y\le 1}|\underline A_x(y,t)-y|\to 0, \qquad
\sup_{x\in \Omega}\| \underline g_{x,t}-(f_0)_x\|_{X_{[0,1]}}\to 0.
\end{equation}
Since, by \eqref{cauchy A} and assumptions
\eqref{Ga global} and \eqref{Sa global}, 
\begin{equation*}\begin{split}
|\underline A_x(y,t)-y|& \le
\int_0^\tau 	\left\{\int
\cG_x(\underline A_x(y,s), \hat A_x(\xi,s))
		\,dg_{x,s}(\xi)+\cS(x, \underline A_x(y,s),u(x,s)) 
\right\} 
\, ds
\\&
\le
C_1 \int_0^\tau |\underline A_x(y,s)-y|\; ds + C_2\tau,
\end{split}\end{equation*}
\eqref{A,g; t to 0}$_1$ follows from Gronwall's Lemma.
On the other hand, \eqref{A,g; t to 0}$_2$ easily follows from Lemma \ref{g}$(i)$ and its proof.

To prove the $(\cT_d,\cT)$-continuity of $\cH$, let $\hat A_n, \hat A\in 
L^{\infty}(\Omega; C([0,1]\times [0,\tau];[0,1])$
be such that $(\hat A_n, g_n, u_n) \to (\hat A, g, u)$ in $ \cX_{\tau,\rho}$
as $n\to \infty$. 
We must show that $A_n\to A$ in $L^1(\Omega\times [0,1]\times [0,\tau])$.

By the Dominated Convergence Theorem, this follows if 
\begin{equation}\label{conv a.e.}
\underline A_n  \to \underline A \quad \text{a.e.~in }\Omega\times [0,1]\times [0,\tau]\quad\mbox{as $n\to\infty$}.
\end{equation}

To prove \eqref{conv a.e.} we observe that 
\begin{equation}\label{0}\begin{split}
|  (\underline A_n)_x(y,t) - & \underline A_x(y,t)|
\\ \le &
\int_0^t\!\left|  \int\!\left[\cG_x((\underline A_n)_x(y,s), (\hat A_n)_x(\xi,s))-\cG_x(\underline A_x(y,s), \hat A_x(\xi,s))\right]
d (g_n)_{x,s}(\xi)\right|\!ds
\\&
+ \int_0^t\left| \int \cG_x(\underline A_x(y,s), \hat A_x(\xi,s))d (g_n-g)_{x,s}(\xi)\right|ds
\\&
+ \int_0^t\left|\cS(x, (\underline A_n)_x(y,s), u_n(x,s)) - \cS(x, (\underline A_n)_x(y,s), u(x,s))\right|\, ds
\\&
+ \int_0^t\left|\cS(x, (\underline A_n)_x(y,s), u(x,s)) - \cS(x, \underline A_x(y,s), \underline u(x,s))\right|\, ds
\\=:&\ 
I_1+I_2+I_3+I_4,
\end{split}\end{equation}
where $I_j= I_j(x,y,t)$ for $j=1,2,3,4$.
It follows easily from \eqref{Sa global} that 
$$
I_3\le C_\rho t\, \sup_{x\in \Omega, 0\le s\le \tau} |u_n(x,s) - u(x,s)|
\le
C_\rho t\, d((\hat A_n,g_n,u_n), (A,g,u)),
$$
\begin{equation*}\begin{split}
I_4  & \le C_\rho t\! \sup_{x\in \Omega, \ 0\le y\le 1, \ 0\le s\le \tau}\! |(\hat A_n)_x(y,s)) \!- \!\hat A_x(y,s)|
\!\le\!
C_\rho t\, d((\hat A_n,g_n,u_n), (A,g,u)).
\end{split}\end{equation*}
By \eqref{Ga global},
\begin{equation*}\begin{split}
I_1  \le & \, C_\rho  \int_0^t \Big\{ \int \big| (\underline A_n)_x(y,s) - \underline A_x(y,s)\big| d(g_n)_{x,s}(\xi)\Big\} \, ds
\\&
+ C_\rho  \int_0^t \Big\{ \int \big|(\hat A_n)_x(\xi,s))-\hat A_x(\xi,s)) \big| d (g_n)_{x,s}(\xi)\Big\} \, ds
\\
= &\, C_\rho  \int_0^t \big| (\underline A_n)_x(y,s) - \underline A_x(y,s)\big|  \, ds
\\&
+ C_\rho  \int_0^t \Big\{ \int \big|(\hat A_n)_x(\xi,s))-\hat A_x(\xi,s)) \big| d (g_n)_{x,t}(\xi)\Big\} \, ds
\\
\le &\, C_\rho  \int_0^t \big| (\underline A_n)_x(y,s) - \underline A_x(y,s)\big|  \, ds
+ C_\rho t\, d((\hat A_n,g_n,u_n), (A,g,u)).
\end{split}\end{equation*}
Thus, by Gronwall's inequality, Proposition \ref{wasserstein} and the Dominated Convergence Theorem,
\begin{equation}\label{continuity 2}\begin{split}
\Big| & ( \underline A_n)_x(y,t) - \underline A_x(y,t)\Big| 
\le I_2(x,y,\tau) + C_\rho  \tau \, d((\hat A_n,g_n,u_n), (A,g,u)) \to 0
\end{split}\end{equation}
as $n\to\infty$. This proves \eqref{conv a.e.}.

It remains to prove that $\cH$ is a contraction on $\cH(X_{\rho,\tau})$ if $\tau$ is 
small enough. Let $(\hat A^1,g^1,u^1)$, $(\hat A^2,g^2,u^2) \in X_{\rho,\tau}$. Repeating verbatim the arguments leading to 
\eqref{continuity 2}, we obtain that
\begin{equation}\label{continuity 3}\begin{split}
\left|  [\underline A^1_x- \underline A^2_x](y,t)\right| \le  
&\int_0^t\left| \int\cG_x(\underline A^2_x(y,s), \hat A^2_x(\xi,s))d 
(g^1-g^2)_{x,s}(\xi)\right| ds\!  
\\&
+\! C_\rho  \tau \, d((\hat A^1,g^1,u^1), (\hat A^2,g^2,u^2)).
\end{split}\end{equation}
Since $(\hat A^2,g^2,u^2)\in\cH(X_{\rho,\tau})$, 
it follows from Remark \ref{stime A} 
that $\hat A^2_x(\xi,s)$ and, by \eqref{Ga global}, \\ 
$\cG_x(\underline A^2_x(y,s), \hat A^2_x(\xi,s))$
are Lipschitz continuous in $\xi$, uniformly with respect to $x$ and $s$.
Thus, by Proposition \ref{W1},
\begin{equation}\label{continuity 4}\begin{split}
\left|  \underline A^1_x(y,t) - \underline A^2_x(y,t)\right| 
&\le C_\rho \int_0^t \cW(g^1_{x,s}, g^2_{x,s}) \, ds  
\le  
C_\rho  \tau \, d((\hat A^1,g^1,u^1), (A^2,g^2,u^2)).
\end{split}\end{equation}

Consider now
$\cW_1(\underline g^1_{x,t}, \underline g^2_{x,t})$.
In view of the definition of $\underline g^1,\underline g^2$,
we may repeat verbatim the arguments in the proof of Lemma \ref{g} and obtain that
\begin{equation}\label{May 27 eq:1}\begin{split}
\cW_1  (\underline g^1_{x,t}, \underline g^2_{x,t})
& \le C\max_{[0,T]} \eta \,t\; \sup_{0\le s\le t}\cW_1\big(
g^1_{x,s} ,g^2_{x,s}\big)
\\&
\le 
C\tau \,d((\hat A^1,g^1,u^1),(\hat A^2,g^2,u^2)).
\end{split}\end{equation}

Finally, we estimate $\sup\limits_{\Omega\times [0,\tau]} |\underline u^1- \underline u^2|$.
Set
$\underline U= \underline u^1-\underline u^2$ and 
$\underline U = (\underline U_1, \dots, \underline U_N)$.
Then $\underline U$ is a weak solution (in the sense of Proposition \ref{nittka}) of a system 
similar to \eqref{system fixed point}-\eqref{IC+BC fixed point}, with $F_j$ replaced by
$
\tilde F_j:= F_j(\underline A^1,\underline g^1,u^1) - F_j(\underline A^2, \underline g^2,u^2),
$
$j=1,\dots, N$, and $u_0$ by $\underline U(x,0)=0$. 
By Proposition \ref{nittka}, 
\begin{equation}\label{april 28 eq:1}\begin{split}
\|\underline U \|_{C(\overline  \Omega\times [0,\tau]; \mathbb R)}
\le C 
\sum_{i=1}^N \|\tilde F_i\|_{C(\overline \Omega\times [0,\tau]; \mathbb R)}.
\end{split}\end{equation}
If $k>1$, $F_k$ is a polynomial in the components of $u$ and, since
$u_1,u_2$ are uniformly bounded by $\rho$ in $\Omega\times [0,\tau]$,
\begin{equation}\label{april 28 eq:2}
\|\tilde F_k\|_{C(\overline \Omega\times [0,\tau]; \mathbb R)}  \le C_\rho
\sum_i  \| u_i^1 - u_i^2\|_{C(\overline \Omega\times [0,\tau]; \mathbb R)}\quad\text{if }k>1.
\end{equation}
The same argument applies to the polynomial terms of $\tilde F_1$, so we are left with the estimate of
\begin{equation*}
I\!=\left\| \int_0^1\!\!(\mu_0\!+\!  \underline A^1_x(\xi,t))  (1\!-\! \underline A^1_x(\xi,t))\, d\underline g^1_{x,t}(\xi)
\!-\!\!\int_0^1\!\!(\mu_0\!+\! \underline A^2_x(\xi,t))(1\!-\! \underline A^2_x(\xi,t))\, d\underline g^2_{x,t}(\xi) \right \|_{C(\overline \Omega\times [0,\tau]; \mathbb R)}\!\! .
\end{equation*}
Arguing as above, 
\begin{equation*}\begin{split}
I \le J_1+J_2
:= &\int_0^1\big| (\mu_0+  \underline A^1_x(\xi,t))  (1- \underline A^1_x(\xi,t))
 -  (\mu_0+  \underline A^2_x(\xi,t))  (1- \underline A^2_x(\xi,t))\big|
\, d\underline g^1_{x,t}(\xi)
\\&+
\Big|  \int_0^1 (\mu_0+ \underline A^2_x(\xi,t))  (1- \underline A^2_x(\xi,t)) \,d(\underline g^1-\underline g^2)(\xi)  \Big|.
\end{split}\end{equation*}
Repeating the arguments that yield the estimate \eqref{continuity 3}, 
we have that
\begin{equation*}\begin{split}
J_1  & \le C_\rho\, \sup_{x,\xi,t}
\big |\underline A^1_x(\xi,t)) - \underline A^2_x(\xi,t)) \big|
 \int_0^1 d \underline g^1_{x,t}(\xi)
 \\&
  = C_\rho\, \sup_{x,\xi,t} \big |\underline A^1_x(\xi,t)) - \underline A^2_x(\xi,t))\big| 
  \le C_\rho  \tau \, d((\hat A^1,g^1,u^1), (A^2,g^2,u^2)).
\end{split}\end{equation*}
Concerning $J_2$, by Remark \ref{stime A} 
the map $\xi\to  (\mu_0+ \underline A^2_x(\xi,t))  (1- \underline A^2_x(\xi,t))$
is uniformly Lipschitz continuous. Thus, by Proposition \ref{W1} and \eqref{May 27 eq:1},
\begin{equation*}
J_2 \le C_\rho \cW_1(\underline g^1,\underline g^2)
\le C_\rho\tau \,d((\hat A^1,g^1,u^1),(\hat A^2,g^2,u^2)).
\end{equation*}
Combining the estimates of $I,J_1,J_2$
with \eqref{april 28 eq:2} and \eqref{april 28 eq:1}, we obtain that
\begin{equation}\label{april 28 eq:3}
\|\underline u_1 - \underline u_2 \|_{C(\overline \Omega\times [0,\tau]; \mathbb R)}
\le C_\rho\tau \,d((\hat A^1,g^1,u^1),(\hat A^2,g^2,u^2)).
\end{equation}

It follows from \eqref{continuity 4}, \eqref{May 27 eq:1} and \eqref{april 28 eq:3}
that $\cH$ is a contraction on $\cH(\cX_{\rho, \tau})$
if $\tau$ is small enough.
\end{proof}

To complete the proof of Theorem \ref{theorem local existence}, we need a minor modification of the classical Banach-Caccioppoli fixed point theorem:
\begin{proposition}[Fixed Point Theorem]
Let $(X,d)$ be a complete metric space and let $\cT_d$ be the topology induced by $d$. Let $\cT$ be a Hausdorff topology on $X$ which is weaker than $\cT_d$. If $\cH:X\to X$ is a contraction on $\cH(X)$ which is $(\cT_d, \cT)$-continuous, then $\cH$ has a unique fixed point.
\end{proposition}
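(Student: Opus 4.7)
The plan is to mimic the classical Banach-Caccioppoli argument, taking care of the fact that the contraction property is only assumed on the image $\mathcal{H}(X)$, and exploiting the weaker Hausdorff topology $\mathcal{T}$ only at the step where we pass to the limit under $\mathcal{H}$.

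First, pick any $x_0 \in X$ and define the Picard iterates $x_{n+1} := \mathcal{H}(x_n)$. For every $n \ge 1$, the iterate $x_n$ belongs to $\mathcal{H}(X)$, so for $n \ge 2$ both $x_n$ and $x_{n-1}$ lie in $\mathcal{H}(X)$ and the contraction hypothesis gives
\[
d(x_{n+1}, x_n) = d(\mathcal{H}(x_n), \mathcal{H}(x_{n-1})) \le \lambda \, d(x_n, x_{n-1})
\]
for some $\lambda\in (0,1)$. Iterating yields $d(x_{n+1}, x_n) \le \lambda^{n-1} d(x_2, x_1)$, so by the usual telescoping argument $(x_n)_{n\ge 1}$ is a Cauchy sequence in $(X,d)$. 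By completeness it converges in $\mathcal{T}_d$ to some $x^\ast \in X$.

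Next, I would identify $x^\ast$ as a fixed point. Since $x_n \to x^\ast$ in $\mathcal{T}_d$, the $(\mathcal{T}_d, \mathcal{T})$-continuity of $\mathcal{H}$ gives $\mathcal{H}(x_n) \to \mathcal{H}(x^\ast)$ in $\mathcal{T}$. On the other hand, $\mathcal{H}(x_n) = x_{n+1} \to x^\ast$ in $\mathcal{T}_d$, and hence also in $\mathcal{T}$ since $\mathcal{T}\subseteq\mathcal{T}_d$. The Hausdorff property of $\mathcal{T}$ forces $\mathcal{H}(x^\ast) = x^\ast$. This is the only place where the weaker topology $\mathcal{T}$ is used: in the intended application, $\mathcal{H}$ need not be continuous with respect to $d$ itself, which is precisely the technical subtlety that motivates the modified statement.

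For uniqueness, suppose $x^\ast, y^\ast$ are two fixed points. Then both belong to $\mathcal{H}(X)$, so the contraction hypothesis applies and
\[
d(x^\ast, y^\ast) = d(\mathcal{H}(x^\ast), \mathcal{H}(y^\ast)) \le \lambda \, d(x^\ast, y^\ast),
\]
forcing $d(x^\ast, y^\ast) = 0$, i.e.\ $x^\ast = y^\ast$. There is no real obstacle here; the only delicate point is the asymmetric use of the two topologies, namely using $\mathcal{T}_d$-convergence of the Picard iterates (from the contraction estimate) and $\mathcal{T}$-uniqueness of the limit (from the continuity hypothesis).
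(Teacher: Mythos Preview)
Your proof is correct and follows essentially the same route as the paper: construct the Picard iterates, use the contraction on $\mathcal{H}(X)$ to get a Cauchy sequence, and then exploit the $(\mathcal{T}_d,\mathcal{T})$-continuity together with the Hausdorff property of $\mathcal{T}$ to identify the limit as a fixed point. The only cosmetic differences are that the paper starts directly from $x_0\in\mathcal{H}(X)$ (while you start from an arbitrary $x_0$ and shift indices), and that you spell out the uniqueness argument, which the paper leaves implicit.
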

\begin{proof}
We start carrying out the standard iteration procedure 
\begin{equation}\label{iteration}
x_{n+1}= \cH(x_n),
\end{equation}
starting from a point $x_0\in \cH(X)$, so that $x_n\in \cH(X)$ for all $n\ge 0$. As usual, by the completeness of $(X,d)$, we may assume that $x_n\to \bar x\in X$ as $n\to \infty$. When $\cH$ is a contraction on all of $X$ (and hence  in particular is Lipschitz continuous from $X$ to $X$) we can conclude the proof taking the limit as $n\to\infty$ in \eqref{iteration}. In our case the argument has to be slightly adapted: on one side, $x_{n+1}\to\bar x$ as $n\to\infty$ with respect to the topology $\cT$ (since it is weaker that $\cT_d$), on the other hand $\cH (x_{n})\to  \cH (\bar x)$ as $n\to\infty$ with respect to the topology $\cT$, since $\cH$  is $(\cT_d, \cT)$-continuous, Thus, by \eqref{iteration} we can conclude that $\bar x = \cH (\bar x)$ by the uniqueness of the limit in $\cT$.
\end{proof}

\begin{proof}[Proof of Theorem \ref{theorem local existence}.]
By Proposition \ref{contr 1} and the Fixed Point Theorem, 
system \eqref{system rewritten}-\eqref{IC+BC} has a unique solution (in the sense of Definition \ref{definition of solution})
in $[0,\tau]$ for sufficiently small values of $\tau$ if we show the nonnegativity of $u_i$: 
\begin{equation}\label{u>0}
u_i \ge 0 \quad\text{in }\Omega\times [0,\tau] \qquad (i=1,\dots,N).
\end{equation}
If $i=N$, \eqref{u>0} is trivially satisfied. If $1\le i<N$, \eqref{u>0} formally follows from the maximum principle. 
Below we make this precise if $i=1$. If $i>1$ the proof is even easier.

Since $f= C_{\mathcal F}\displaystyle {\int_0^1}(\mu_0+\hat A_x(\xi,t))(1-\hat A_x(\xi,t))\, dg_{x,t}(\xi)$
is nonnegative and belongs to $L^{\infty}(Q_T)$, there exists a sequence of smooth nonnegative functions $(f_k)_{k\in \mathbb N}$ 
converging to $f$ in $L^r(Q_T)$, where $r> n$ and $\frac1r+\frac{n}{2r}<1$. 
We also approximate 
$h=\sum_{j=1}^{N}a_{1,j}u_j\in C(\overline Q_T)$ uniformly by 
smooth functions $h_k$.   Let $v_k$ be the unique smooth solution of 
 \begin{equation}\label{CB k}
	\begin{cases}
 \eps\partial_tv_k=d_1\!\Delta v_k-v_k h_k + f_k&\text{in }Q_\tau\\
	\smallskip v_k(x,0)=u_{01}(x) & \text{if }x\in \Omega\\
		\smallskip \partial_n v_k (x,t)=0 & \text{if }x\in \partial \Omega_0, \ t>0\\
		\partial_n v_k (x,t)=-\gamma_i v_k (x,t)&\text{if }x\in \partial \Omega_1,\ t>0. 
	\end{cases}
\end{equation}
Since $\gamma_1>0$, $f_k\ge 0$ in $Q_\tau$ and $u_{01}\ge 0$ in $\Omega$, it follows from the maximum principle
that $v_k\ge $ in $Q_\tau$.  

On the other hand $w_k:=u_1-v_k$  is a weak solution of
 \begin{equation}\label{CB k bis}
	\begin{cases}
 \eps\partial_tw_k=d_1\!\Delta w_k-w_k h_k +f- f_k &\text{in }Q_\tau\\
	\smallskip w_k(x,0)=0 & \text{if }x\in \Omega\\
		\smallskip \partial_n w_k (x,t)=0 & \text{if }x\in \partial \Omega_0, \ t>0\\
		\partial_n w_k (x,t)=-\gamma_i w_k (x,t)&\text{if }x\in \partial \Omega_1,\ t>0, 
	\end{cases}
\end{equation}
and it follows from \cite{nittka}, Theorem 3.2, that $v_k \to u_1$ uniformly on $\overline Q_\tau$. Therefore
also $u_1\ge 0$ in $Q_\tau$.
\end{proof}

\section{Global existence}\label{section global existence}
In this section we complete the proof of Theorem \ref{theorem main result} by showing that the local solution 
of problem \eqref{system rewritten}-\eqref{IC+BC}, constructed in the previous section, can be continued to
the whole interval $[0,T]$. We recall that
problems \eqref{system rewritten}-\eqref{IC+BC} and \eqref{complete system}-\eqref{IBC} are equivalent, 
as we have shown in section \ref{characteristics}.

Arguing by contradiction we suppose that the maximal interval of existence is $[0,\tau^*)$ for 
some $\tau^*<T$. 

\subsection*{A priori estimate for $u(x,t)$}
Since 
\begin{equation} \label{boundedness F script}
	C_\cF\int_0^1(\mu_0+A_x(\xi,t))(1-\hat A_x(\xi,t))\,dg_{x,t}(\xi)\le C_1 \quad \text{in } \Omega\times [0,\tau^*)
\end{equation}
for some constant $C_1$, it follows formally from the maximum principle that 
$$ u_1(x,t)\le\sup_\Omega u_{01}+C_1t \quad \text{for } x\in \Omega,\ 0\le t<\tau^*. $$
Similarly, if $u_1,\cdots, u_{m-1}$ are bounded in $L^\infty(\Omega\times [0,\tau^*))$ for some $1<m< N$, then
$$ \frac{1}{2}\sum\limits_{j=1}^{m-1}a_{j,m-j}u_ju_{m-j} \le C_m \quad \text{in } \Omega\times [0,\tau^*) $$
for some constant $C_m$, and it follows formally from the maximum principle that
$$ u_m(x,t)\le\sup_\Omega u_{0m}+C_mt \quad \text{for } x\in\Omega,\ 0\le t<\tau^*. $$
In both cases the use of the maximum principle is justified as in the proof of \eqref{u>0}.

The boundedness of $u_N$ in $\Omega\times [0,\tau^*)$ follows from that of $u_1,\cdots, u_{N-1}$, so we have shown that, for some $C_u>0$,
\begin{equation}\label{estimate of u}
	|u|\le C_u \quad \text{in } \Omega\times [0,\tau^*).
\end{equation}

\subsection*{Existence of $\lim_{t\to \tau^*}A_x(y,t)=:A_x(y,\tau^*)$.}
Arguing as in the proof of Lemma \ref{char febr 7} we obtain that $A_x(y,t)$ and  $v_x(A_x(y,t),t)$ are Lipschitz continuous with respect to $y$, uniformly with respect to $x\in \Omega$ and $t\in [0,\tau^*)$. By the boundedness of $v_x(A_x(y,t),t)$, the map $t\mapsto A_x(y,t)$ is Lipschitz continuous on $[0,\tau^*)$. Hence $A_x(y,\tau^*):=\lim_{t\to \tau^*}A_x(y,t)$ exists and is   Lipschitz continuous with respect to $y$, uniformly with respect to $x\in \Omega$.

\subsection*{Existence of $\lim_{t\to \tau^*} g_{x,t}=:  g_{x,\tau^*}$.}
We repeat verbatim the arguments of the proof of Lemma \ref{g} and we obtain that the map $t \mapsto  g_{x,t}$ is Lipschitz continuous from $[0,\tau^*)$ to $X_{[0,1]}$ endowed with Wasserstein metric $\cW_1$.

\subsection*{Existence of $\lim_{t\to \tau^*} u (x,t) =:  u(x,\tau^*)$.}
In view of \eqref{boundedness F script} and \eqref{estimate of u}, it follows from standard regularity theory for weak solutions of parabolic equations (see e.g. \cite[Theorem 1, page 111]{rothe}) that $u$ is uniformly (H\"older) continuous in $\Omega\times [0,\tau^*)$. Hence $u$ can be extended to  $\Omega\times [0,\tau^*]$ as a continuous function.

Hence we can apply the local existence theorem to the ``initial'' functions $g_{x,\tau^*}$ and $u(x,\tau^*)$, and obtain a solution in $[\tau^*,\tau_1]$ for some $\tau_1\in [\tau^*,T]$. Therefore $[0,\tau^*)$ is not the maximal interval of existence and we have found a contradiction.

\section*{Acknowledgments}
The authors would like to express their gratitude to MD Norina Marcello for many stimulating and fruitful discussions over several years.

B.~F. and M.~C.~T. are supported by the University of Bologna, funds for selected research topics, and by MAnET Marie Curie
Initial Training Network. B.~F. is supported by 
GNAMPA of INdAM (Istituto Nazionale di Alta Matematica ``F. Severi''), Italy, and by PRIN of the MIUR, Italy.

A.~T. is member of GNFM (Gruppo Nazionale per la Fisica Matematica) of INdAM (Istituto Nazionale di Alta Matematica ``F. Severi''), Italy.

\bibliographystyle{amsplain}
\bibliography{BmFbTmcTa-alzheimer_analysis-R1}

\appendix

\section{Probability measures and Wasserstein metrics}
\label{app:measures}
Throughout this appendix, $X$ denotes a {\it complete separable metric space}, with metric $d$. A positive Borel measure $\mu$ on $X$ such that $\mu(X)=1$ is said a probability measure, and we write $\mu\in \cP(X)$. Every $\mu\in \cP(X)$ is a Radon measure (see \cite{AGS}).

\begin{definition}[Push forward of measures]
\label{def:push_forward}
Let $\cB(X)$ be the Borel $\sigma$-algebra of subsets of $X$ and $\phi:X\to X$ a Borel measurable function, i.e. one such that $\phi^{-1}(E)\in\cB(X)$ for every $E\in\cB(X)$. Let moreover $\mu\in\cP(X)$. The \emph{push forward} of $\mu$ through $\phi$ is the measure $\nu\in\cP(X)$, denoted by $\nu=\phi_\#\mu$, such that
$$ \nu(E):=\mu(\phi^{-1}(E)), \quad \forall\,E\in\cB(X). $$
Equivalently, the measure $\nu$ can be characterised by
$$ \int_X f(x)\,d\nu(x)=\int_X f(\phi(x))\,d\mu(x) $$
for every bounded Borel function $f$ defined on $X$.
\end{definition}

\begin{definition}[Wasserstein distances]
Let $p\geq 1$ and $\mu,\,\nu\in\cP(X)$ be such that
$$ \int_X d(x,\bar{x})^p\,d\mu(x)<+\infty, \qquad \int_X d(x,\bar{x})^p\,d\nu(x)<+\infty $$
for some $\bar{x}\in X$. The \emph{$p$-th Wasserstein distance} between $\mu$ and $\nu$ is the number denoted by $\cW_p(\mu,\nu)$ and defined by
$$ \cW_p^p(\mu,\nu):=\inf\left\{\iint_{X^2}d(x,y)^p\,d\gamma(x,y)\,:\,\gamma\in\Gamma(\mu,\nu)\right\}, $$
where $\Gamma(\mu,\nu)\subset\cP(X^2)$ is the set of all \emph{transference plans} between $\mu$ and $\nu$, i.e. the set of measures $\gamma\in\cP(X^2)$ whose \emph{marginals} are $\mu$, $\nu$, respectively.
\end{definition}

\begin{proposition}[\cite{AGS}, Proposition 7.1.5] \label{moment}
If $\mu\in\cP(X)$ has compact support then for any $\bar x\in X$ and $p\geq 1$
$$ \int_X d(x,\bar x)^p\,d\mu(x)<+\infty. $$
In particular, $\mu$ has finite $p$-moment. We shall write $\mu\in \cP_p(X)$. Endowed with the Wasserstein $p$-distance $\cW_p$, $\cP_p(X)$ is a complete metric space. 
\end{proposition}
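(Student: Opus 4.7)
The plan is to handle the two assertions separately. The first one---finiteness of $\int_X d(x,\bar x)^p\,d\mu(x)$ for compactly supported $\mu$---is immediate: writing $K := \supp\mu$, the function $x\mapsto d(x,\bar x)^p$ is continuous on the compact set $K$, hence bounded by some constant $M$, so
\[
\int_X d(x,\bar x)^p\,d\mu(x) = \int_K d(x,\bar x)^p\,d\mu(x) \le M\mu(K) = M < +\infty.
\]
Finiteness of the $p$-th moment is then built into the definition. The bulk of the work is the completeness of $(\cP_p(X),\cW_p)$, which splits into verifying that $\cW_p$ is actually a metric and then that every Cauchy sequence converges to an element of $\cP_p(X)$.

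For the metric axioms, symmetry is inherited from $d$, and non-degeneracy follows because any plan $\gamma$ with $\cW_p(\mu,\nu)=0$ must concentrate on the diagonal of $X\times X$, forcing $\mu=\nu$. The triangle inequality is the substantive axiom: I would invoke the standard Gluing Lemma, which, given optimal plans $\gamma_{12}\in\Gamma(\mu_1,\mu_2)$ and $\gamma_{23}\in\Gamma(\mu_2,\mu_3)$, produces via disintegration against the common marginal $\mu_2$ a measure $\sigma\in\cP(X^3)$ with prescribed pairwise projections. Applying Minkowski's inequality in $L^p(X^3,\sigma)$ to the pointwise bound $d(x_1,x_3)\le d(x_1,x_2)+d(x_2,x_3)$ then yields $\cW_p(\mu_1,\mu_3)\le\cW_p(\mu_1,\mu_2)+\cW_p(\mu_2,\mu_3)$.

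For completeness, I would let $(\mu_n)\subset\cP_p(X)$ be Cauchy in $\cW_p$ and proceed in four steps. First, the Cauchy property together with the just-proved triangle inequality gives uniformly bounded $p$-moments $\sup_n\int d(x,\bar x)^p\,d\mu_n<\infty$. Second, establish tightness of $(\mu_n)$: combining Ulam's theorem (each $\mu_n$ is individually tight on the complete separable space $X$) with near-optimal transport between consecutive terms and the Cauchy property, for every $\epsilon>0$ one constructs a compact set $K_\epsilon\subset X$ with $\mu_n(K_\epsilon)\ge 1-\epsilon$ uniformly in $n$. Third, Prokhorov's theorem extracts a narrowly convergent subsequence $\mu_{n_k}\rightharpoonup\mu\in\cP(X)$, and Fatou's lemma places $\mu$ in $\cP_p(X)$. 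Fourth, upgrade narrow to $\cW_p$-convergence and propagate from subsequence to full sequence via the Cauchy property.

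The main obstacle is the fourth step, since narrow convergence by itself does not imply $\cW_p$-convergence; one needs uniform integrability of $d(x,\bar x)^p$ with respect to $(\mu_n)$. To extract it, fix a reference index $n_0$; for $R$ large and a near-optimal plan $\gamma$ from $\mu_n$ to $\mu_{n_0}$, the tail $\int_{\{d(x,\bar x)>R\}}d(x,\bar x)^p\,d\mu_n$ is controlled by the sum of $\int d(x,y)^p\,d\gamma$ (close to $\cW_p^p(\mu_n,\mu_{n_0})$, bounded by the Cauchy property) and $\int_{\{d(y,\bar x)>R/2\}}d(y,\bar x)^p\,d\mu_{n_0}$, which vanishes as $R\to\infty$ since $\mu_{n_0}\in\cP_p(X)$. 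Once uniform integrability is secured, a truncation argument reduces the $\cW_p$ statement along $(\mu_{n_k})$ to the convergence of integrals of bounded continuous functions, which is delivered by the narrow convergence; a standard Cauchy argument then transfers the limit to the full sequence.
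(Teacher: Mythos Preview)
Your proof sketch is correct and follows the standard route (gluing lemma for the triangle inequality; tightness plus Prokhorov plus uniform $p$-integrability for completeness). Note, however, that the paper does not prove this proposition at all: it is stated with a direct citation to \cite{AGS}, Proposition~7.1.5, and no argument is given in the appendix. So there is no ``paper's own proof'' to compare against---your contribution here is to supply what the authors outsourced to the reference. The approach you outline is essentially the one in Ambrosio--Gigli--Savar\'e, so in that sense you are reproducing the cited proof rather than departing from it.
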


\begin{proposition}[Kantorovich-Rubinstein duality, cf. \cite{AGS}, Eq. (7.1.2)]
\label{W1}
If $\mu, \nu \in \cP_1(X)$ have compact support, then
$$ \cW_1(\mu,\nu)=\sup\left\{\int_X\phi\,d(\mu-\nu)\,:\,\phi\in\operatorname{Lip}_1(X,\mathbb R)\right\}, $$
where $\operatorname{Lip}_1(X,\mathbb R)$ is the space of Lipschitz continuous functions $\phi:X\to\R$ with Lipschitz constant not greater than $1$.
\end{proposition}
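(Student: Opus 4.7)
The plan is to prove the two inequalities separately, following the classical route to Kantorovich--Rubinstein duality.

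First, the inequality $\sup\le\cW_1$ is essentially immediate. For any $\phi\in\operatorname{Lip}_1(X,\R)$ and any transference plan $\gamma\in\Gamma(\mu,\nu)$, the marginal property of $\gamma$ gives
$$\int_X\phi\,d(\mu-\nu)=\iint_{X^2}[\phi(x)-\phi(y)]\,d\gamma(x,y)\le\iint_{X^2}d(x,y)\,d\gamma(x,y),$$
where the last bound uses the 1-Lipschitz condition. Taking the infimum over $\gamma\in\Gamma(\mu,\nu)$ produces $\int\phi\,d(\mu-\nu)\le\cW_1(\mu,\nu)$, and then the supremum over admissible $\phi$ yields $\sup\le\cW_1$.

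For the reverse inequality $\cW_1\le\sup$, I would invoke the general Kantorovich duality, which, under the finite first-moment hypothesis ensured by Proposition \ref{moment}, asserts
$$\cW_1(\mu,\nu)=\sup\Bigl\{\int_X\phi\,d\mu+\int_X\psi\,d\nu : \phi,\psi\in C_b(X),\ \phi(x)+\psi(y)\le d(x,y)\Bigr\}.$$
The central reduction is to replace an arbitrary admissible pair $(\phi,\psi)$ by one of the special form $(\phi,-\phi)$ with $\phi\in\operatorname{Lip}_1$. To do so, introduce the $c$-transform $\phi^c(y):=\inf_{x\in X}[d(x,y)-\phi(x)]$. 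Three observations close the argument: (i) by the triangle inequality for $d$, $\phi^c$ is automatically 1-Lipschitz; (ii) the admissibility $\phi(x)+\psi(y)\le d(x,y)$ forces $\psi\le\phi^c$, so substituting $\psi$ by $\phi^c$ only increases the objective; and (iii) when $\phi$ itself is 1-Lipschitz, $\phi^c\equiv-\phi$, since $\phi^c(y)\le d(y,y)-\phi(y)=-\phi(y)$ while $\phi(x)\le\phi(y)+d(x,y)$ gives $\phi^c(y)\ge-\phi(y)$. Iterating the $c$-transform once more ($\phi^{cc}$ is 1-Lipschitz and dominates $\phi$) shows that the supremum is attained on pairs $(\phi,-\phi)$ with $\phi\in\operatorname{Lip}_1$, which is exactly the right-hand side in the claim.

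The genuinely hard ingredient is the Kantorovich duality theorem itself, which requires a functional-analytic argument of Hahn--Banach or Fenchel--Rockafellar / minimax type on a suitable space of continuous functions. The compact-support hypothesis on $\mu,\nu$ makes this step particularly clean, because one may localise on a compact $X_0\supset\operatorname{supp}\mu\cup\operatorname{supp}\nu$ and work in the unital $C^*$-algebra $C(X_0)$, where the required separation arguments are standard. Since all of this is already carried out in the reference \cite{AGS}, in practice I would quote their equation~(7.1.2) rather than redo the derivation from scratch.
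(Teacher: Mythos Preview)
Your argument is correct, and in fact more detailed than what the paper does: the paper gives no proof at all for this proposition, simply citing \cite{AGS}, Eq.~(7.1.2). Your final sentence (``in practice I would quote their equation~(7.1.2) rather than redo the derivation from scratch'') is exactly the paper's approach, so the two agree.
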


\begin{definition} Let $(\mu_n)_{n\in\mathbb N}$ be a sequence in $\cP(X)$.
We say that
\begin{enumerate}[(i)]
\item $\mu_n\to\mu$ \emph{narrowly} if for any bounded continuous function $f$
$$ \int_X f\,d\mu_n\to\int_X f \,d\mu \qquad \text{as } n\to\infty $$
\item $\mu_n\to\mu$ \emph{$\text{weakly}^*$} if for any compactly supported continuous function $f$
$$ \int_X f\,d\mu_n\to\int_X f\,d\mu \qquad \text{as } n\to\infty. $$
\end{enumerate}
\end{definition}

\begin{remark}\label{narrow - weak}
Obviously, narrow convergence implies $\text{weak}^*$ convergence, and narrow and weak$^*$ convergence are equivalent if $X$ is compact.
\end{remark}

\begin{proposition}\label{wasserstein}
Let $X$ be a separable metric space. Let $(\mu_n)_{n\in\mathbb N}$ be a sequence in $\cP_p(X)$. We have:
\begin{enumerate}[(i)]
\item if $\cW_p(\mu_n,\mu)\to 0$ as $n\to\infty$, then $\mu_n\to\mu$ as $n\to\infty$ $\text{weakly}^*$;
\item suppose there exist a compact set $K$ such that $\mathrm{supp}\;\mu_n\subset K$ for all $n\in\mathbb N$ and an open set $\mathcal O$ satisfying
$$ K\subset\cO \qquad \text{and} \qquad X\setminus \cO\neq \emptyset. $$
Then
$$ \cW_p(\mu_n,\mu)\to 0 \qquad \text{as } n\to\infty $$
if and only if $\mu_n\to\mu$ as $n\to\infty$ $\text{weakly}^*$ (or, equivalently, narrowly).
\end{enumerate}
\end{proposition}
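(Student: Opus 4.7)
The plan is to prove parts (i) and (ii) separately: for part (i) I would use a direct quantitative estimate via the optimal transport plan, and for part (ii) I would reduce, by exploiting the common compact support $K$, to the well-known equivalence of narrow and $\cW_p$ convergence on a compact metric space.

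For part (i), I would fix $f \in C_c(X)$ with $\|f\|_\infty = M$ and exploit uniform continuity: given $\eps > 0$, pick $\delta > 0$ such that $|f(x) - f(y)| < \eps$ whenever $d(x,y) < \delta$. Choosing an optimal plan $\gamma_n \in \Gamma(\mu_n, \mu)$ realizing $\cW_p^p(\mu_n,\mu)$, using the marginals of $\gamma_n$, and splitting the domain according to $d(x,y) < \delta$ versus $d(x,y) \ge \delta$, Markov's inequality gives
\begin{equation*}
\left| \int_X f\,d\mu_n - \int_X f\,d\mu \right|
\le \iint |f(x) - f(y)|\,d\gamma_n(x,y)
\le \eps + \frac{2M}{\delta^p}\cW_p^p(\mu_n,\mu).
\end{equation*}
Letting $n \to \infty$ and then $\eps \to 0$ yields $\text{weak}^*$ convergence.

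For part (ii), the direction ``$\cW_p \Rightarrow \text{weak}^*$'' is part (i), so only the converse is genuine. Assuming $\mu_n \to \mu$ weakly*, I would use the hypothesis $K \subset \cO$ open with $X \setminus \cO \ne \emptyset$ to build a Urysohn-type cutoff $\phi \in C_c(X)$ with $0 \le \phi \le 1$, $\phi \equiv 1$ on $K$, and $\supp \phi \subset \overline{\cO}$. This cutoff serves two purposes. First, to show $\supp \mu \subset K$: for any $g \in C_c(X)$ vanishing on $K$, the support hypothesis gives $\int g\,d\mu_n = 0$, whence $\int g\,d\mu = 0$ by weak* convergence, and $\mu(X\setminus K)=0$ by Radon regularity of $\mu$. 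Second, to promote weak$^*$ to narrow convergence on $K$: for any $h \in C(K)$, Tietze extension yields $\tilde h \in C_b(X)$ and $\phi\tilde h \in C_c(X)$ agrees with $h$ on $K$, so
\begin{equation*}
\int_K h\,d\mu_n = \int_X \phi\tilde h\,d\mu_n \longrightarrow \int_X \phi\tilde h\,d\mu = \int_K h\,d\mu.
\end{equation*}
With narrow convergence on the compact space $K$ in hand, I would finish by Skorokhod's representation (valid because $d$ is bounded on $K\times K$): realize $\mu_n,\mu$ by $K$-valued random variables $Y_n,Y$ with $Y_n \to Y$ a.s.\ on a common probability space, use dominated convergence to obtain $\mathbb E[d(Y_n,Y)^p] \to 0$, and take the joint law of $(Y_n,Y)$ as a transference plan to conclude $\cW_p^p(\mu_n,\mu) \to 0$.

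The hard part will be the construction of the compactly supported cutoff $\phi$, which is why the seemingly odd hypothesis ``$X \setminus \cO \ne \emptyset$'' is imposed: it guarantees that $\phi$ can vanish on a nonempty closed set and therefore have genuinely compact support (in the locally compact setting, which is the only one relevant to the paper since all applications take place in $X = [0,1]$). Once $\phi$ is available, the chain ``weak$^*$ on $X$ $\Rightarrow$ narrow on $K$ $\Rightarrow$ $\cW_p$ on $K$'' is routine, the crucial point being that compact support automatically makes the $p$-th moments uniformly bounded, which is exactly the extra ingredient needed beyond narrow convergence to upgrade to $\cW_p$.
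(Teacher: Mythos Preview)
Your argument is correct (in the locally compact setting you flag, which is indeed the only one used in the paper), but it follows a genuinely different route from the paper's proof. For part (i) you give a direct quantitative estimate via an optimal plan and Markov's inequality, whereas the paper simply defers to \cite[Proposition~7.1.5]{AGS}. For part (ii) the divergence is more substantial: the paper invokes \cite[Proposition~7.1.5]{AGS}, which characterises $\cW_p$-convergence as narrow convergence plus uniform integrability of $p$-moments, and then verifies the moment condition via \cite[Lemma~5.1.7]{AGS} by testing against continuous functions of $p$-polynomial growth; the Urysohn construction is used there to replace such a test function by one supported in $\cO$, so that the integrals reduce to integrals over $K$. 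You instead use the Urysohn cutoff to promote weak$^*$ convergence on $X$ to narrow convergence on the compact space $K$, and then close with Skorokhod's representation and dominated convergence (legitimate since $d$ is bounded on $K\times K$). Your route is more self-contained and makes transparent why compact common support is the decisive hypothesis; the paper's route is shorter on the page but leans on two external results from \cite{AGS}. Both arguments share the same essential idea---that once everything lives on a fixed compact $K$, the $p$-moments are automatically controlled---but they package this idea differently.
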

\begin{proof}
We apply \cite[Proposition 7.1.5]{AGS}. We have but to prove that the $\mu_n$'s have uniformly integrable $p$-moments. By \cite[Lemma 5.1.7]{AGS} the assertion will follow by showing that
$$ \lim_{n\to\infty}\int_X f(x)\, d\mu_n(x) = \int_X f(x)\, d\mu(x) $$
for any continuous real function $f$ such that
\begin{equation}\label{p growth}
	|f(x)| \le A + Bd(x,\bar x)^p \qquad A,B>0,\ \bar{x}\in X \text{ fixed}.
\end{equation}
Take now a continuous map $f:X\to\R$ satisfying \eqref{p growth}. By Urysohn's lemma we can easily construct a continuous function $\tilde f$ such that
$$ \supp\tilde f\subset\cO \qquad \text{and} \qquad \tilde f\equiv f\ \text{in } K. $$
Thus
\begin{equation*}
\begin{split}
\lim_{n\to\infty} & \int_X f(x)\, d\mu_n(x) = \lim_{n\to\infty}\int_K f(x)\, d\mu_n(x) = \lim_{n\to\infty}\int_K \tilde f(x)\, d\mu_n(x)
\\&
= \lim_{n\to\infty}\int_X \tilde f(x)\, d\mu_n(x) = \int_X \tilde f(x)\, d\mu(x) = \int_K \tilde f(x)\, d\mu(x)
\\&
= \int_K f(x)\, d\mu(x) = \int_X  f(x)\, d\mu(x). \qedhere
\end{split}
\end{equation*}
\end{proof}

\begin{remark}
If $X$ is compact, then the assertion is trivial. Indeed, we have already pointed out that narrow convergence and $\text{weak}^*$ convergence are equivalent on compact metric spaces. Thus we can apply \cite[Proposition 7.1.5]{AGS}. Indeed the $\mu_n$'s have uniformly integrable $p$-moments, by \cite[Lemma 5.1.7]{AGS}.
\end{remark}

\begin{proposition}\label{push}
Let  $X,Y$ be  complete separable metric spaces. In addition, let $X$ be compact and assume
that for any compact set $K\subset Y$ there exists an open set $\cO$ such that
$$ K\subset \cO  \qquad \text{and} \qquad Y\setminus \cO\neq \emptyset. $$
Let $(\mu_n)_{n\in\mathbb N}$ be a sequence in $\cP_p(X)$. Let $\Phi_n: X\to  Y$ be a sequence of continuous injective (and hence open) maps that converges uniformly to a continuous injective (and hence open) map $\Phi: X\to  Y$. Then, if $p>0$
$$ \lim_{n\to \infty} \cW_p(\mu_n,\mu)=0 \qquad\mbox{if and only if}\qquad  \lim_{n\to \infty} \cW_p((\Phi_n)_\#\mu_n, \Phi_\#\mu)=0. $$
\end{proposition}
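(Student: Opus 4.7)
The plan is to use Proposition \ref{wasserstein} to translate both Wasserstein convergences into narrow convergence, and then transfer narrow convergence through the uniformly convergent maps $\Phi_n$.

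First I would ensure Proposition \ref{wasserstein} is available on both sides. On $X$, since $X$ is compact every $\mu\in\cP(X)$ is compactly supported and has bounded $p$-moment, so $\cW_p(\mu_n,\mu)\to 0$ is equivalent to narrow (equivalently, by Remark~\ref{narrow - weak}, weak$^*$) convergence; this is the content of the final remark following Proposition \ref{wasserstein}. On $Y$, I would form the set $L:=\overline{\bigcup_n \Phi_n(X)\cup\Phi(X)}$. Because $\Phi_n\to\Phi$ uniformly on the compact $X$, for every $\varepsilon>0$ all but finitely many $\Phi_n(X)$ lie in the $\varepsilon$-neighbourhood of the compact set $\Phi(X)$; this makes $\bigcup_n\Phi_n(X)\cup\Phi(X)$ totally bounded, hence $L$ is compact by completeness of $Y$. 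The standing hypothesis on $Y$ then yields an open $\cO$ with $L\subset\cO$ and $Y\setminus\cO\neq\emptyset$; since every push-forward $(\Phi_n)_\#\mu_n$ and $\Phi_\#\mu$ is supported in $L$, Proposition \ref{wasserstein} gives that $\cW_p((\Phi_n)_\#\mu_n,\Phi_\#\mu)\to 0$ if and only if $(\Phi_n)_\#\mu_n\to\Phi_\#\mu$ narrowly. It thus suffices to prove the equivalence of the two narrow convergences.

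For the forward implication I would assume $\mu_n\to\mu$ narrowly and fix an arbitrary $g\in C_b(Y)$. Using the push-forward identity and adding and subtracting $\int_X g\circ\Phi\,d\mu_n$, I split
\[
\int_Y g\,d(\Phi_n)_\#\mu_n - \int_Y g\,d\Phi_\#\mu
= \int_X\bigl(g\circ\Phi_n - g\circ\Phi\bigr)\,d\mu_n
+ \Bigl(\int_X g\circ\Phi\,d\mu_n - \int_X g\circ\Phi\,d\mu\Bigr).
\]
The second bracket vanishes by narrow convergence applied to $g\circ\Phi\in C_b(X)$. For the first, $g$ is uniformly continuous on the compact set $L$, so the uniform convergence $\Phi_n\to\Phi$ on $X$ yields $g\circ\Phi_n\to g\circ\Phi$ uniformly on $X$, and the integral against the probability measures $\mu_n$ tends to zero.

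For the converse, the delicate step is to lift a test function $f\in C_b(X)$ to $Y$. Here injectivity is crucial: because $X$ is compact and $\Phi$ is continuous and injective, $\Phi\colon X\to\Phi(X)$ is a continuous bijection from a compact space onto a Hausdorff one, hence a homeomorphism. Thus $f\circ\Phi^{-1}\in C_b(\Phi(X))$, and since $\Phi(X)$ is closed in the normal space $Y$, Tietze's extension theorem produces $\tilde f\in C_b(Y)$ with $\tilde f\circ\Phi=f$ on $X$. Narrow convergence of the push-forwards then yields
\[
\int_X f\,d\mu = \int_Y\tilde f\,d\Phi_\#\mu = \lim_n\int_Y\tilde f\,d(\Phi_n)_\#\mu_n = \lim_n\int_X\tilde f\circ\Phi_n\,d\mu_n,
\]
while uniform continuity of $\tilde f$ on $L$ combined with $\Phi_n\to\Phi$ uniformly gives $\tilde f\circ\Phi_n\to f$ uniformly on $X$, whence $\int_X f\,d\mu_n\to\int_X f\,d\mu$. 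The principal obstacle is precisely this reverse direction: without the injectivity of $\Phi$ there is no pullback mechanism for test functions, and the compactness of $X$ is what lets continuity plus injectivity be upgraded to a homeomorphism onto $\Phi(X)$ so that Tietze's theorem can be invoked.
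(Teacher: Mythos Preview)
Your argument is correct and self-contained, but the route you take for the converse direction is genuinely different from the paper's. For the forward implication the paper quotes \cite[Lemma 5.2.1]{AGS} to pass from narrow convergence of $\mu_n$ to narrow convergence of the push-forwards, then builds a compact set $K_0$ (the closed $\varepsilon$-neighbourhood of $\Phi(X)$) to apply Proposition~\ref{wasserstein}; your direct splitting of the integral and the construction of $L$ accomplish the same thing without the external citation, which is a mild gain in self-containment. The substantive divergence is in the converse: the paper argues by compactness---since $X$ is compact, $(\mu_n)$ is tight and hence narrowly relatively compact, so one extracts a subsequence $\mu_{n_j}\to\nu$, applies the already-proved forward direction to get $(\Phi_{n_j})_\#\mu_{n_j}\to\Phi_\#\nu$ in $\cW_p$, and then uses uniqueness of the limit together with injectivity of $\Phi$ to conclude $\Phi_\#\nu=\Phi_\#\mu\Rightarrow\nu=\mu$. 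Your approach instead lifts test functions from $X$ to $Y$ via the homeomorphism $\Phi:X\to\Phi(X)$ and Tietze extension, giving a direct verification of narrow convergence of $\mu_n$. Your method is more constructive and avoids the subsequence/identification-of-limit manoeuvre; the paper's method is shorter once the forward direction is in hand and makes the role of injectivity (namely, that $\Phi_\#$ is injective on measures) more transparent. Both use injectivity of $\Phi$ in an essential way and neither actually needs the injectivity of the $\Phi_n$.
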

\begin{proof}  
By Proposition \ref{wasserstein} and  \cite[Remark 5.1.5]{AGS}, the sequence $(\mu_n)_{n\in\mathbb N}$ is tight. By \cite[Lemma 5.2.1]{AGS} 
$$ \lim_{n\to \infty}\cW_p(\mu_n,\mu)=0 \quad \Rightarrow \quad (\Phi_n)_\#\mu_n\to \Phi_\#\mu $$
narrowly as $n\to\infty$. Set
set 
$$ K_0:= \overline{\{y\; ; \; d(y, \Phi(K))<\eps\}}. $$
If $n>\bar n$, then $\mathrm{supp}\, (\Phi_n)_\#\mu_n \subset \Phi_n(K)\subset K_0$ that is compact. By assumption, there is an open set $\cO_0$ such that 
$$ K_0\subset \cO_0  \qquad \text{and} \qquad Y\setminus \cO_0\neq \emptyset. $$
Thus, by Proposition \ref{wasserstein}, $\lim_{n\to \infty} \cW_p((\Phi_n)_\#\mu_n, \Phi_\#\mu)=0$. This proves the first part of the statement.

Suppose now $ \lim_{n\to \infty} \cW_p((\Phi_n)_\#\mu_n, \Phi_\#\mu)=0$.

We notice now that the sequence $(\mu_n)_{n\in\mathbb N}$ in $\cP(X)$ is tight (again by Remark \ref{narrow - weak}), and hence, by \cite[Theorem 5.1.3]{AGS}, is relatively compact with respect to the narrow convergence. Therefore, there exists a subsequence $(\mu_{n_j})_{j\in\mathbb N}$ converging narrowly to $\nu\in \cP(X)$. By Proposition \ref{wasserstein} $\lim_{j\to \infty} \cW_p(\mu_{n_j},\nu) =0$, and then $ \lim_{n\to \infty} \cW_p((\Phi_{n_j})_\#\mu_n, \Phi_\#\nu)=0$ (by the first part of the present proposition). Thus the uniqueness of the Wasserstein limit yields $\Phi_\#\nu = \Phi_\#\mu$ and eventually $\nu =\mu$, i.e. $\lim_{j\to \infty} \cW_p(\mu_{n_j},\mu) =0$. A standard argument in metric spaces makes possible to recover the limit for the full sequence $(\mu_n)_{n\in\mathbb N}$.
\end{proof}

\begin{corollary} \label{cont meas}
Let $X,\,Y$ be complete separable metric spaces satisfying the assumption of Proposition \ref{push}. If $I\subset \mathbb R$ is an interval, let $\Phi: X\times I\to Y$ be a continuous map such that for any $t\in I$ the map $x\to \Phi(x,t)$ is injective and open.

If $t\in I$, let $\mu(t)\in \cP(X)$ such that $\supp \mu(t)\subset K$ for all $t\in I$, where $K\subset X$ is a compact as in Proposition \ref{wasserstein}.
 
Then $t\mapsto\mu(t)$ is continuous (with respect to the Wasserstein topology) if and only if $t\mapsto\Phi(\cdot,t)_\#\mu(t)$ is continuous (with respect to the Wasserstein topology).
\end{corollary}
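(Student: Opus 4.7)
The plan is to reduce the corollary, at each fixed $t_0\in I$, to a direct application of Proposition~\ref{push} along an arbitrary sequence $t_n\to t_0$. The sequential characterisation of continuity in metric spaces will then promote this pointwise equivalence to the asserted equivalence of continuity of the two maps.

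Concretely, I would fix $t_0\in I$, pick any sequence $(t_n)_{n\in\mathbb N}\subset I$ with $t_n\to t_0$, and set $\Phi_n:=\Phi(\cdot,t_n)$ and $\Phi_0:=\Phi(\cdot,t_0)$. Each of these maps is continuous, injective and open by hypothesis, so Proposition~\ref{push} is a natural candidate; all that is still needed to invoke it is the uniform convergence $\Phi_n\to\Phi_0$ on $X$. This is the one genuinely technical point, and it is where the compactness of $X$ (built into the hypotheses of Proposition~\ref{push}) is essential: since $X$ is compact and $\{t_0\}\cup\{t_n\}_{n\geq n_0}$ sits in a closed bounded subinterval $J\subset I$, the restriction of the continuous map $\Phi$ to the compact set $X\times J$ is uniformly continuous, whence $\sup_{x\in X}d_Y(\Phi_n(x),\Phi_0(x))\to 0$.

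Once uniform convergence is established, the hypothesis $\supp\mu(t)\subset K$ places every $\mu(t_n)$ in $\cP_p(X)$ with supports inside the fixed compact $K$, so the support/openness requirements of Proposition~\ref{push} are met. Invoking that proposition yields
$$ \lim_{n\to\infty}\cW_p\bigl(\mu(t_n),\mu(t_0)\bigr)=0 \quad \text{if and only if} \quad \lim_{n\to\infty}\cW_p\bigl((\Phi_n)_\#\mu(t_n),(\Phi_0)_\#\mu(t_0)\bigr)=0. $$
Since $(t_n)$ was arbitrary, continuity at $t_0$ of $t\mapsto\mu(t)$ in $(\cP_p(X),\cW_p)$ is equivalent to continuity at $t_0$ of $t\mapsto\Phi(\cdot,t)_\#\mu(t)$ in $(\cP_p(Y),\cW_p)$, and as $t_0\in I$ itself was arbitrary, the claimed global equivalence follows.

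The main (and in fact only) obstacle is the uniformisation step from pointwise to uniform convergence of the $\Phi_n$, which is precisely why compactness of $X$ in the hypotheses of Proposition~\ref{push} is indispensable; the rest of the argument is a direct transcription of Proposition~\ref{push} into sequential language.
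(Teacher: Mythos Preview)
Your proposal is correct and is precisely the intended argument: the paper states the corollary without proof, treating it as an immediate consequence of Proposition~\ref{push}, and the only thing to supply is the reduction to sequences together with the observation that compactness of $X$ upgrades the continuity of $\Phi$ to uniform convergence $\Phi(\cdot,t_n)\to\Phi(\cdot,t_0)$, exactly as you do.
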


\end{document}